\documentclass[11pt]{amsart}
\usepackage{amsmath, amssymb}
\usepackage[T1]{fontenc}   
\usepackage{stmaryrd}
\usepackage{youngtab}
\usepackage[english]{babel}



\newcommand{\tdun}[1]{\begin{picture}(10,5)(-2,-1)
\put(0,0){\circle*{2}}
\put(3,-2){\tiny #1}
\end{picture}}

\newcommand{\tddeux}[2]{\begin{picture}(12,5)(0,-1)
\put(3,0){\circle*{2}}
\put(3,5){\circle*{2}}
\put(3,0){\line(0,1){5}}
\put(6,-2){\tiny #1}
\put(6,3){\tiny #2}
\end{picture}}

\newcommand{\tdtroisun}[3]{\begin{picture}(20,12)(-5,-1)
\put(3,0){\circle*{2}}
\put(6,7){\circle*{2}}
\put(0,7){\circle*{2}}
\put(-0.65,0){$\vee$}
\put(5,-2){\tiny #1}
\put(9,5){\tiny #2}
\put(-5,5){\tiny #3}
\end{picture}}

\newcommand{\tdtroisdeux}[3]{\begin{picture}(12,15)(-2,-1)
\put(0,0){\circle*{2}}
\put(0,5){\circle*{2}}
\put(0,10){\circle*{2}}
\put(0,0){\line(0,1){5}}
\put(0,5){\line(0,1){5}}
\put(3,-2){\tiny #1}
\put(3,3){\tiny #2}
\put(3,9){\tiny #3}
\end{picture}}


\input{xy}
\xyoption{all}

\newtheorem{defi}{\indent Definition}
\newtheorem{lemma}[defi]{\indent Lemma}
\newtheorem{cor}[defi]{\indent Corollary}
\newtheorem{theo}[defi]{\indent Theorem}
\newtheorem{prop}[defi]{\indent Proposition}

\newcommand{\N}{\mathbb{N}}

\newcommand{\dpos}{\mathbf{dp}}
\newcommand{\dqp}{\mathbf{dqp}}

\newcommand{\sqp}{\mathbf{sqp}}
\newcommand{\tqp}{\mathbf{tqp}}

\newcommand{\h}{\mathcal{H}}
\newcommand{\E}{{\mathcal E}}
\newcommand{\I}{{\mathcal I}}

\begin{document}

\author{Lo\"\i c Foissy}
\address{LMPA Joseph Liouville\\
Universit\'e du Littoral C\^ote d'opale\\ Centre Universitaire de la Mi-Voix\\ 50, rue Ferdinand Buisson, CS 80699\\ 62228 Calais Cedex, France\\
email {foissy@lmpa.univ-littoral.fr}}

\author{Claudia Malvenuto}
\address{Dipartimento di Matematica\\ Sapienza Universit\`a
di Roma\\ P.le A. Moro 5\\ 00185, Roma, Italy\\
email {claudia@mat.uniroma1.it}}

\author{Fr\'ed\'eric Patras}
\address{UMR 7351 CNRS\\
        		Universit\'e de Nice\\
        		Parc Valrose\\
        		06108 Nice Cedex 02
        		France\\
        		email {patras@unice.fr}}
\title{A theory of pictures for quasi-posets}
\date{}

\begin{abstract}
The theory of pictures between posets is known to encode much of the combinatorics of symmetric group representations and related topics such as Young diagrams and tableaux. Many reasons, combinatorial (e.g. since semi-standard tableaux can be viewed as double quasi-posets) and topological (quasi-posets identify with finite topologies) lead to extend the theory to quasi-posets. This is the object of the present article.
\end{abstract}

\maketitle

\section*{Introduction}\label{Intro}

The theory of pictures between posets is known to encode much of the combinatorics of symmetric group representations and related topics such as preorder diagrams and tableaux.
The theory captures for example the Robinson-Schensted (RS) correspondence or the Littlewood-Richardson formula, as already shown by Zelevinsky in the seminal article \cite{zelevinsky1981generalization}.
Recently, the theory was extended to double posets (pairs of orders coexisting on a given finite set -- hereafter, ``order'' means ``partial order''; an order on $X$ defines a poset structure on $X$) and developed from the point of view of combinatorial Hopf algebras which led to new advances in the field \cite{MR2,foissy2013algebraic,foissy2013plane,foissy2014deformation}.

In applications, a fundamental property that has not been featured enough, is that often pictures carry themselves implicitly a double poset structure. A typical example is given by standard Young tableaux, which can be put in bijection with certain pictures (this is one of the nicest way in which their appearance in the RS correspondence can be explained \cite{zelevinsky1981generalization}) and carry simultaneously a poset structure (induced by their embeddings into $\N\times\N$ equipped with the coordinate-wise partial order) and a total order (the one induced by the integer labelling of the entries of the tableaux).

However, objects such as tableaux with repeated entries, such as semi-standard tableaux, although essential, do not fit into this framework. They should actually be thought of instead as double quasi-posets (pairs of preorders on a given finite set): the first preorder is the same than for standard tableaux (it is an order), but the labelling by (possibly repeated) integers is naturally captured by a preorder on the entries of the tableau (the one for which two entries are equivalent if they have the same label and else are ordered according to their labels).

Besides the fact that these ideas lead naturally to new results and structures on preorders, other observations and motivations have led us to develop on systematic bases in the present article a theory of pictures for quasi-posets. Let us point out in particular recent developments (motivated by applications to multiple zeta values, Rota-Baxter algebras, stochastic integrals... \cite{KEFsmf,ebrahimi2015flows,ebrahimi2015exponential}) 
that extend to surjections \cite{Novelli,novelli2,foissy2,foissy3}
the theory of combinatorial Hopf algebra structures on permutations \cite{MR,foissy2007bidendriform}. New results on surjections will be obtained in the last section of the article.

Lastly, let us mention our previous works on finite topologies (equivalent to quasi-posets) \cite{FM,FMP} (see also \cite{fauvet2015hopf,fauvet2016operads} for recent developments) which featured the two products defined on finite topologies by disjoint union and the topological join product. 
The same two products, used simultaneously, happen to be the ones that define on double quasi-posets an algebra structure (and actually self-dual Hopf algebra) structure extending the usual one on double posets.

The article is organized as follows.
Section \ref{dqp} introduces double quasi-posets.
Sections \ref{adqp} and \ref{Hopf} introduce and study Hopf algebra structures on double quasi-posets.
Section \ref{pprs} defines pictures between double quasi-posets. Due to the existence of equivalent elements for both preorders
of a double quasi-poset, the very notion of pictures is much more flexible than for double posets. From Section \ref{psdu} onwards,
we focus on the algebraic structures underlying the theory of pictures for double quasi-posets. Section \ref{psdu} investigates duality phenomena and shows that pictures define a symmetric Hopf pairing on the Hopf algebra of double quasi-posets. Section \ref{ip} addresses the question of internal products, generalizing the corresponding results on double posets. Internal products (by which we mean the existence of an associative product of double posets within a given cardinality) are a classical property of combinatorial Hopf algebras. Once again, the rich structure of double quasi-posets allows for some flexibility in the definitions, and we introduce two internal associative products extending the one on double posets and permutations.
Section \ref{ps} investigates the restriction of the internal products to surjections. A product  different from the usual composition of surjections and of the one on the Solomon-Tits algebra emerges naturally from the theory of pictures.

\vspace{0.5cm}
\textbf{Notations.} 
Recall that a packed word  is a word over the integers (or any isomorphic strictly ordered set) containing the letter 1 and such that, if the letter $i>1$ appears, then all the letters between $1$ and $i$ appear (e.g. $21313$ is packed but not $2358223$). We write $\E_n$ for the set of packed words of length $n$; the subset $\E_n(k)$ of packed words of length $n$ with $k$ distinct letters identifies with the set of surjections from $[n]$ to $[k]$ when the latter are represented as a packed word (by writing down the sequence of their values on $1,\dots,n$).
Let us write $\I_n$ for increasing packed words (such as $11123333455$) (resp. $\I_n(k)$ for packed words with $k$ different letters). Increasing packed words of length $n$ are in bijection with compositions ${\mathbf n}=(n_1,\dots,n_k)$, $n_1+\dots+n_k=n$, of $n$, by counting the number of $1s$, $2s$... (The previous increasing packed word is associated to the composition $(3,1,4,1,2)$). 

All the algebraic structures (algebras, vector spaces...) are defined over a fixed arbitrary ground field $k$.

\section{Double quasi-posets}\label{dqp}
In the article, order means partial order. We say equivalently that an order is strict or total.
Preorders are defined by relaxing the antisymmetry condition, making possible $x\leq y$ and $y\leq x$ for $x\not=y$. A set equipped with a preorder is called a quasi-poset.
Finite quasi-posets identify with finite topologies, a classical result due to Alexandroff \cite{Alexandroff} revisited from the point of view of combinatorial Hopf algebras in \cite{FM,FMP}.

\textbf{Notations.} Let $\leq_1$ be a preorder on a set $A$. We define an equivalence relation on $A$ by:
$$\forall i,j\in A,\: i\sim_1 i\mbox{ if } i\leq_1 j\mbox{ and }j\leq_1 i.$$
We shall write $i<_1 j$ if $i\leq_1 j$ and not $i\sim_1 j$.

\begin{defi}
A double quasi-poset is a triple $P=(V(P),\leq_1,\leq_2)$ where $V(P)$ is a finite set, and $\leq_1$, $\leq_2$ are two preorders on $V(P)$.
The set of (isoclasses of) double quasi-posets is denoted by $\dqp$. The vector space generated by $\dqp$ is denoted by $\h_\dqp$.
\end{defi}
In practice, one can always assume that $V(P)=[n]:=\{1,\dots ,n\}$. We denote by $\dqp(n)$ double quasi-posets with $n$ elements (the same notation will be used for other families of objects without further comments).

\begin{defi}
Let $P,Q\in\dqp$. A morphism between $P$ and $Q$ is a doubly increasing bijection, i.e. a bijection $f$ between $V(P)$ and $V(Q)$ such that
$$i\leq_1 j\Rightarrow f(i)\leq_1f(j),$$
$$i\leq_2 j\Rightarrow f(i)\leq_2f(j).$$
\end{defi}

The morphism $f$ is an isomorphism (resp. an automorphism when $P=Q$) if and only if
$$i\leq_1 j\Leftrightarrow f(i)\leq_1f(j),$$
$$i\leq_2 j\Leftrightarrow f(i)\leq_2f(j).$$
We write $Aut(P)$ for the group of automorphisms of $P$.

\begin{defi}
A double quasi-poset $P$ is special (resp. strict special) if $\leq_2$ is a total preorder, that is to say:
$$\forall i,j\in V(P),\: i\leq_2 j\mbox{ or }j\leq_2 i,$$
(resp. a total order).
The set of (isoclasses) of special double quasi-posets is denoted by $\sqp$. The vector space generated by $\sqp$ is denoted by $\h_\sqp$.
\end{defi}

Notice that a total preorder on $[n]$ identifies canonically with a surjection, and conversely. This is best explained through an example indicating the general rule: consider the surjection $f$ from $[5]$ to $[3]$ defined by
$$f(2)=f(4):=1,\ \ f(1):=2,\ \ f(3)=f(5):=3,$$
the corresponding total preorder $\leq_f$ (with a self-explaining notation) is
$$2\sim_f4\leq_f 1\leq_f 3\sim_f 5.$$
We will represent both $f$ and $\leq_f$ by the packed word associated to the sequence of values of $f$ on $1,\dots,5$: $f=21313$. 

\begin{defi}
A double quasi-poset $P$ is trivial if $\leq_1$ is the trivial preorder (i.e. two distinct elements are never comparable for $\leq_1$).
The set of trivial double quasi-posets is denoted by $\tqp$. It is in bijection with the set of (isoclasses of) quasi-posets. The vector space generated by $\tqp$ is denoted by $\h_\tqp$.
\end{defi}

\begin{defi} Let $P\in \dqp$. If both $\leq_1$ and $\leq_2$ are orders (resp. if $\leq_2$ is strict), we shall say that $P$ is a double poset (resp. special double poset).
The set of (isoclasses of) double posets is denoted by $\dpos$ and the space generated by $\dpos$ is denoted by $\h_\dpos$.
\end{defi}

We graphically represent any special double quasi-poset $P$ by the reduced Hasse graph of $\leq_1$ (reduced means that two equivalent vertices are identified); the second, total, preorder
is given by integer indices on the vertices of this graph. For example, here are special double quasi-posets of cardinality $\leq 2$:
\begin{align*}
&1; \tdun{$1$};\tddeux{$1$}{$2$},\tddeux{$2$}{$1$},\tddeux{$1$}{$1$},\tdun{$1$}\tdun{$2$},\tdun{$1$}\tdun{$1$},
\tdun{$1,2$}\hspace{3mm}, \tdun{$1,1$}\hspace{2mm}.
\end{align*}

Here, $1$ denotes the empty graph, $\tdun{$1,2$}$ \hspace{2mm} (resp. \tdun{$1,1$}\hspace{2mm}) represents a two-elements set $\{a,b\}$ with $a\sim_1 b$ and $a<_2b$ (resp. $a\sim_1 b$ and $a\sim_2b$).
For the first cardinalities, we have:

$$\begin{array}{|c|c|c|c|c|}
\hline n&1&2&3&4\\
\hline\sharp\dqp(n)&1&10&166&5965\\
\hline\sharp\sqp(n)&1&7&74&1290\\
\hline\end{array}$$

\section{Algebra structures on double quasi-posets}\label{adqp}
Let $P,Q\in \dqp$. We define two preorders on $V(P)\sqcup V(Q)$:
\begin{align*}
\forall i,j\in V(P)\sqcup V(Q),\: i\leq_1 j\mbox{ if }&(i,j\in V(P)\mbox{ and } i\leq_1 j) \\
&\mbox{ or }(i,j\in V(Q)\mbox{ and } i\leq_1 j);\\
\: i\leq_2 j\mbox{ if }&(i,j\in V(P)\mbox{ and } i\leq_2 j)\\
&\mbox{ or }(i,j\in V(Q)\mbox{ and } i\leq_2 j)\\
&\mbox{ or }(i\in V(P)\mbox{ and } j\in V(Q)).
\end{align*}
This defines a double quasi-poset denoted by $PQ$. Extending this product by bilinearity, we make $\h_\dqp$ an associative algebra, whose
unit is the empty double quasi-poset $1$. 

\begin{lemma}If $P$ and $Q$ are special, then $PQ$ is special: $\h_\sqp$ is subalgebra of $\h_\dqp$.
If $P$ and $Q$ are trivial, then $PQ$ is trivial: $\h_\tqp$ is subalgebra of $\h_\tqp$.
\end{lemma}
From a topological point of view, the first operation (on $\leq_1$) corresponds to the disjoint union of finite topologies; the second, to the join product \cite{FM,FMP}.
It is often useful to transform finite topologies by removing degeneracies (points that can not be separated). The following definition provides a way of doing so in the context of double preorders. 

\begin{defi}
Let $P$ be a double quasi-poset. 
We call splitting of $P$ and denote by $pos(P)=(V(P),\preceq_1,\preceq_2)$ 
the double poset defined by:
\begin{align*}
\forall i,j\in V(P),\:& i\preceq_1 j\mbox{ if } i<_1 j\mbox{ or } i=j,& i\preceq_2 j\mbox{ if } i<_2 j\mbox{ or } i=j.
\end{align*}
\end{defi}

For example, the splitting of $\tddeux{$1$}{$2,3$}\hspace{3mm}$ is $\tdtroisun{$1$}{$3$}{$2$}$. It follows from the definitions that

\begin{lemma}\label{homalg}
 The splitting map is an algebra map from $\h_\dqp$ to its subalgebra $\h_\dpos$.
\end{lemma}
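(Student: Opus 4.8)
The plan is to verify directly that the linear map $pos$, which is defined on generators and extended by linearity, is multiplicative and unital. Since $pos$ lands in double posets by construction (one checks that $\preceq_1$ and $\preceq_2$ are genuine orders, the antisymmetry resting on the observation that $i<_1 j$ and $j<_1 i$ are incompatible, as they would force $i\sim_1 j$), the only real content is the identity $pos(PQ)=pos(P)\,pos(Q)$ for all $P,Q\in\dqp$. Both sides are double posets on the same underlying set $V(P)\sqcup V(Q)$, so it suffices to show that the two orders $\preceq_1$ and $\preceq_2$ coincide on each side.

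For $\preceq_1$ this is immediate. First I would note that in the disjoint-union preorder $\leq_1$ on $PQ$, two elements are $\leq_1$-comparable only if they lie in the same component, and the induced equivalence $\sim_1$ likewise never identifies elements of different components. Hence $i<_1 j$ in $PQ$ holds exactly when $i,j$ lie in the same component and $i<_1 j$ there, which is precisely the condition defining the disjoint-union order $\preceq_1$ of $pos(P)\,pos(Q)$.

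The substantive step is $\preceq_2$, where the join structure enters. I would split into three cases according to the components of $i$ and $j$. When $i,j$ lie in the same component the computation reduces to the $\preceq_1$ case, using that $\leq_2$ restricted to a single component is unchanged by the product. The delicate case is $i\in V(P)$, $j\in V(Q)$: here $i\leq_2 j$ always holds in $PQ$ by the join rule, while $j\leq_2 i$ fails (neither the same-component clause nor the cross clause applies), so $i$ and $j$ are never $\sim_2$-equivalent and therefore $i<_2 j$ in $PQ$. Consequently splitting retains the relation $i\preceq_2 j$, matching exactly the cross clause $i\in V(P)$, $j\in V(Q)$ in the product $pos(P)\,pos(Q)$; the reverse pairs $i\in V(Q)$, $j\in V(P)$ are comparable on neither side.

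The main obstacle I anticipate is precisely this interaction between the strict-part operation $<_2\mapsto\preceq_2$ and the join product: one must confirm that no cross-component pair is ever collapsed into a $\sim_2$-class by the product, for otherwise splitting would behave differently before and after multiplying. Once this is checked, unitality is trivial ($pos(1)=1$, the empty double poset), and assembling the componentwise agreements yields $pos(PQ)=pos(P)\,pos(Q)$, completing the proof that $pos$ is an algebra morphism from $\h_\dqp$ to $\h_\dpos$.
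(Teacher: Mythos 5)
Your proof is correct and follows the same route the paper intends: the paper disposes of this lemma with ``it follows from the definitions,'' and your case-by-case verification (componentwise agreement of $\preceq_1$, and the observation that cross-component pairs $i\in V(P)$, $j\in V(Q)$ satisfy $i<_2 j$ in $PQ$ because the join rule gives $i\leq_2 j$ but never $j\leq_2 i$) is precisely that direct inspection, spelled out. The only cosmetic omission is that checking $pos(P)$ is a genuine double poset also requires transitivity of the strict part $\preceq_1$, not just antisymmetry, but that is part of the well-definedness of the splitting map rather than of this lemma.
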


\section{Hopf algebra structures}\label{Hopf}

\begin{defi}
Let $P$ be a double quasi-poset and let $X\subseteq V(P)$.
\begin{itemize}
\item $X$ is also a double quasi-poset by restriction of $\leq_1$ and $\leq_2$:
we denote this double quasi-poset by $P_{\mid X}$. 
\item We shall say that $X$ is an open set of $P$ if:
$$\forall i,j\in V(P),\: i\leq_1 j\mbox{ and }i\in X\Longrightarrow j\in X.$$
The set of open sets of $P$ is denoted by $Top(P)$.
\item We shall say that $X$ is a preopen set of $P$ if:
$$\forall i,j\in V(P),\: i<_1 j\mbox{ and }i\in X\Longrightarrow j\in X.$$
The set of preopen sets of $P$ is denoted by $Top_<(P)$.
\end{itemize} \end{defi}

\textbf{Remark.} The splitting map does not preserve homotopy types but is well-fitted to the notion of preopen sets: $$Top_<(P)=Top(pos(P)).$$ 

We define two coproducts on $\h_\dqp$ in the following way:
\begin{align*}
\forall P \in \dqp,\: &\Delta(P)=\sum_{O\in Top(P)}P_{\mid V(P)\setminus O}\otimes P_{\mid O},\\
&
\Delta_<(P)=\sum_{O\in Top_<(P)}P_{\mid V(P)\setminus O}\otimes P_{\mid O}.
\end{align*}
\begin{theo}
Both $(\h_\dqp,m,\Delta)$ and $(\h_\dqp,m,\Delta_<)$ are graded, connected Hopf algebras; moreover, $\h_\sqp$, $\h_\dpos$ and $\h_\tqp$
are Hopf subalgebra for both coproducts.
Finally, the splitting map $pos$ is a Hopf algebra morphism and a projection from $(\h_\dqp,m,\Delta_<)$ to $(\h_\dpos,m,\Delta)$.
\end{theo}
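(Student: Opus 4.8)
The plan is to establish the two bialgebra structures one axiom at a time, deduce the antipodes from graded--connectedness, and then obtain the subobjects and the splitting map as consequences of the single fact that every operation in sight is governed by (pre)open sets and commutes with restriction. Throughout I identify an open set with an up-set for $\leq_1$ and a preopen set with an up-set for the strict relation $<_1$. The grading is by $\sharp V(P)$: the product is homogeneous since $V(PQ)=V(P)\sqcup V(Q)$, and the degree-zero part is spanned by the empty double quasi-poset, so both candidate bialgebras are graded and connected.

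The heart of the argument is coassociativity, which I would prove by computing both iterated coproducts as sums over ordered set-partitions $V(P)=X_1\sqcup X_2\sqcup X_3$. The preliminary observation is that for $i,j$ in a subset $X$, the strict relation of $\leq_1$ restricted to $X$ agrees with the ambient $<_1$; the same for $\leq_1$ itself is immediate. This makes ``open in the restriction'' coincide with ``open inside the subset'', and likewise for preopen sets --- and this is precisely where the flexibility of preorders (equivalent elements that a preopen set is allowed to separate) must be checked rather than taken for granted. With this in hand, writing $\Delta(P)=\sum_{O\in Top(P)}P_{\mid V(P)\setminus O}\otimes P_{\mid O}$, I would expand $(\mathrm{id}\otimes\Delta)\Delta(P)$ and $(\Delta\otimes\mathrm{id})\Delta(P)$ and identify both with $\sum P_{\mid X_1}\otimes P_{\mid X_2}\otimes P_{\mid X_3}$, where the sum is over the triples for which $X_3$ and $X_2\cup X_3$ are both open. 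The content reduces to the elementary equivalence: ``$X_2\cup X_3$ open and $X_3$ open in $X_2\cup X_3$'' $\iff$ ``$X_3$ open and $X_2$ open in $X_1\cup X_2$'' $\iff$ ``$X_3$ and $X_2\cup X_3$ both open''. The identical reasoning with $<_1$ replacing $\leq_1$ settles $\Delta_<$.

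I would then clear the remaining bialgebra axioms. The counit sends positive-degree elements to $0$ and the empty double quasi-poset to $1$; the counit identities hold because the only open set with empty complement is $V(P)$ and the only empty open set is $\emptyset$, both always (pre)open. For multiplicativity I use that $\leq_1$ on $PQ$ carries no relation across the two factors, so every (pre)open set of $PQ$ splits uniquely as a disjoint union of one from $P$ and one from $Q$; since the restriction of $PQ$ to such a complementary pair recovers the product of the two restrictions (the ordinal-sum shape of $\leq_2$ being inherited correctly), this yields $\Delta(PQ)=\Delta(P)\Delta(Q)$, and the same for $\Delta_<$. Being graded connected bialgebras, $(\h_\dqp,m,\Delta)$ and $(\h_\dqp,m,\Delta_<)$ then admit unique antipodes.

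Finally I would dispatch the subobjects and the splitting map. Restriction preserves all the defining properties --- a restricted total preorder is total ($\sqp$), the restricted trivial preorder is trivial ($\tqp$), and a restricted order is an order ($\dpos$) --- so, each coproduct being a sum of tensor products of restrictions and the product preserving these classes (already noted for $\sqp$ and $\tqp$, and immediate for $\dpos$), all three are Hopf subalgebras for both coproducts. For the splitting map I already know it is an algebra map (Lemma~\ref{homalg}); since for an order $i<_1 j$ means $i\leq_1 j$ with $i\neq j$, the map $pos$ fixes $\h_\dpos$ and is therefore an idempotent onto it, and $\Delta$, $\Delta_<$ coincide on $\h_\dpos$. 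The coalgebra compatibility $\Delta\circ pos=(pos\otimes pos)\circ\Delta_<$ then follows from the Remark $Top_<(P)=Top(pos(P))$ together with $pos(P)_{\mid X}=pos(P_{\mid X})$, which is the same restriction-commutes-with-splitting fact used above. I expect the only genuinely delicate point to be this restriction/strict-relation compatibility underpinning coassociativity; everything else is bookkeeping with open sets.
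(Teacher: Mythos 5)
Your proposal is correct and follows what the paper intends: the paper's own proof merely cites \cite{FM} for the coassociativity of $\Delta$, notes that a similar argument works for $\Delta_<$, and leaves multiplicativity, the subalgebras, and the compatibility of $pos$ to ``direct inspection'' from Lemma~\ref{homalg} and the definitions. Your write-up supplies exactly those delegated details -- the three-block open-set equivalence for coassociativity (with the key observation that the strict relation of a restriction equals the restricted ambient strict relation), the unique splitting of (pre)open sets of $PQ$, and $\Delta\circ pos=(pos\otimes pos)\circ\Delta_<$ via $Top_<(P)=Top(pos(P))$ -- so it is the same approach, just made self-contained.
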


The coassociativity of $\Delta$ was proven in \cite{FM}, a similar proof holds for $\Delta_<$. The fact that the two coproducts are algebra maps and the other statements of the Theorem follow from the Lemma \ref{homalg} and from the definitions by direct inspection.\\

The Hopf algebra $\h_\tqp$ identifies with  (one of) the Hopf algebras defined in \cite{FMP} on isoclasses of finite topological spaces and of quasi-posets.\\

\textbf{Remark.} If $\leq_1$ is an order, then $\Delta(P)=\Delta_<(P)$. In particular, $(\h_\dpos,m,\Delta)=(\h_\dpos,m,\Delta_<)$.

\ \par

Let us introduce now the notion of blow up.
Let $P=(V(P),\leq_1,\leq_2)\in\dqp$ and write, for $i\in V(P)$, $P_i:=\{j\in V(P),\ i\sim_1 j\}$. If $P_i\not=\{i\}$ let $\leq^i$ be an arbitrary \it total \rm preorder on $P_i$. We can define a new double quasi-poset $P'=(V(P),\leq_1',\leq_2)$ (the blow up of $P$ along $\leq^i$) by:
$$\forall j\notin P_i,\forall k\in V(P), (j\leq_1' k\Leftrightarrow j\leq_1 k) \ \text{and} \ (j\geq_1' k\Leftrightarrow j\geq_1 k)$$
$$\forall (j,k)\in P_i^2, j\leq_1'k \Leftrightarrow j\leq^i k.$$

\begin{defi}
Any double quasi-poset $P'$ obtained by this process is called an elementary blow up of $P$. A double quasi-poset $Q$ obtained from $P$ by a sequence of elementary blow ups is called a blow up of $P$. We write $B(P)$ for the set of blow ups of $P$.
\end{defi}

For example, the blow ups of $\tddeux{$1$}{$2,3$}\hspace{3mm}$ are $\tddeux{$1$}{$2,3$}\hspace{3mm}$,
$\tdtroisdeux{$1$}{$2$}{$3$}$ and $\tdtroisdeux{$1$}{$3$}{$2$}$.\\

Warning: by definition, blow ups of $P$ have the same element sets than $P$ and their two preorders are defined on $V(P)$. Two isomorphic blow ups of $P$ are equal in $\dqp$, but to keep track of multiplicities, we \it do not \rm identify them inside $B(P)$.

\begin{defi}
Let $P,Q\in\dqp$, we shall say that $P\leq Q$ if $Q$ is isomorphic to $P$ or to a blow up of $P$.
\end{defi}

Equivalently: $P\leq Q$ if there exists
a bijection $f:V(P)\longrightarrow V(Q)$ with the following properties:
\begin{itemize}
\item For all $i,j\in V(P)$, $i$ and $j$ are comparable for $\leq_1$ in $P$ if, and only if, $f(i)$ and $f(j)$ are comparable for $\leq_1$ in $Q$.
\item For all $i,j\in V(P)$, if $i<_1 j$ in $P$, then $f(i)<_1 f(j)$ in $Q$.
\item For all $i,j\in V(P)$, if $f(i)\sim_1 f(j)$ in $Q$, then $i\sim_1 j$ in $P$.
\item For all $i,j \in V(P)$, $i\leq_2 j$ in $P$ if, and only if, $f(i)\leq_2 f(j)$ in $Q$.
\end{itemize}

\begin{lemma}
$\leq$ is an order on $\dqp$.
\end{lemma}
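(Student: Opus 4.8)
The plan is to verify the three defining properties of a partial order---reflexivity, antisymmetry, and transitivity---for the relation $\leq$ on $\dqp$, using the explicit characterization of $P\leq Q$ by the existence of a bijection $f$ satisfying the four bulleted conditions. Reflexivity is immediate: the identity map on $V(P)$ satisfies all four conditions (each ``if and only if'' becomes a tautology, and the two implications are trivially true), so $P\leq P$. The substantive work lies in antisymmetry and transitivity.

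For transitivity, suppose $P\leq Q$ via $f\colon V(P)\to V(Q)$ and $Q\leq R$ via $g\colon V(Q)\to V(R)$; I would show the composite $g\circ f$ witnesses $P\leq R$. The first and fourth conditions (comparability for $\leq_1$, and the full equivalence for $\leq_2$) are stated as biconditionals, so they chain through composition directly. The second condition is also straightforward: if $i<_1 j$ in $P$, then $f(i)<_1 f(j)$ in $Q$, and since $f(i),f(j)$ are comparable, applying the second condition of $g$ gives $g(f(i))<_1 g(f(j))$ in $R$. The third condition requires a contrapositive-style argument: if $g(f(i))\sim_1 g(f(j))$ in $R$, then by the third condition for $g$ we get $f(i)\sim_1 f(j)$ in $Q$, and then by the third condition for $f$ we get $i\sim_1 j$ in $P$. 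Thus all four conditions are preserved under composition, giving $P\leq R$.

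For antisymmetry, I would assume $P\leq Q$ and $Q\leq P$ and prove $P\cong Q$ in $\dqp$, i.e.\ that the witnessing bijection is actually an isomorphism. The key observation is that $P\leq Q$ already forces $f$ to be an isomorphism for $\leq_2$ (the fourth condition is a biconditional) and to preserve $\leq_1$-comparability exactly. What could fail is that $f$ collapses some strict $<_1$ relations into $\sim_1$, or creates new $\sim_1$ relations where there were strict ones. I expect to rule this out by a counting argument: a genuine (nontrivial) blow up strictly increases the number of ordered pairs $(i,j)$ with $i<_1 j$, or equivalently strictly decreases the number of $\sim_1$-equivalent pairs among $\leq_1$-comparable elements. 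Hence if $P\leq Q$ with $P\not\cong Q$, then $Q$ has strictly fewer $\sim_1$-pairs (among comparable elements) than $P$; symmetrically $Q\leq P$ with $Q\not\cong P$ would force the reverse strict inequality, a contradiction. Therefore one of the two must be an isomorphism, giving $P\cong Q$.

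The main obstacle is making the antisymmetry counting argument precise and robust. I would introduce a numerical invariant such as $c(P)=\#\{(i,j): i\sim_1 j\text{ in }P\}$ (counting ordered pairs of $\leq_1$-equivalent elements, including $i=j$), and show via the four conditions that $P\leq Q$ implies $c(Q)\leq c(P)$, with equality precisely when $f$ reflects $\sim_1$ (the converse of the third condition), in which case $f$ is an isomorphism. Concretely, the third condition gives $f(i)\sim_1 f(j)\Rightarrow i\sim_1 j$, so $f$ maps the $\sim_1$-preimage of each equivalence class into that class, yielding $c(Q)\le c(P)$; equality forces each implication to be an equivalence, making $f$ preserve and reflect both $\sim_1$ and (together with the comparability condition) $<_1$, hence an isomorphism. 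Combining $c(Q)\leq c(P)$ from $P\leq Q$ with $c(P)\leq c(Q)$ from $Q\leq P$ yields $c(P)=c(Q)$ and the desired isomorphism.
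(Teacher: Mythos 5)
Your proof is correct and takes essentially the same route as the paper: transitivity comes from composition (the paper phrases this as ``a blow up of a blow up is a blow up''), and antisymmetry from a counting invariant that is strictly monotone under nontrivial blow ups. The only cosmetic difference is that you count ordered pairs of $\sim_1$-equivalent elements (which strictly decreases), whereas the paper counts $\sim_1$-equivalence classes (which strictly increases); your fuller verification that equality of the invariant forces the witnessing bijection to be an isomorphism is exactly the detail the paper leaves implicit.
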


\begin{proof} Indeed, the blow up of a blow up of $P$ is a blow up of $P$. Moreover, a non trivial elementary blow up increases strictly the number of equivalence classes for the relation $\leq_1$. It follows that $P\leq Q$ and $Q\leq P$ imply $P=Q$ in $\dqp$.\end{proof}

\textbf{Example}. Here is the subposet of double quasi-posets greater than $\tdun{$1,2,3$}\hspace{5mm}$:
$$\xymatrix{\tdtroisdeux{$1$}{$2$}{$3$}&\tdtroisdeux{$1$}{$3$}{$2$}&
\tdtroisdeux{$2$}{$1$}{$3$}&\tdtroisdeux{$2$}{$3$}{$1$}
&\tdtroisdeux{$3$}{$1$}{$2$}&\tdtroisdeux{$3$}{$2$}{$1$}\\
\tddeux{$1$}{$2,3$}\hspace{2mm}\ar@{-}[u]\ar@{-}[ru]&\tddeux{$1,2$}{$3$}\hspace{2mm}\ar@{-}|(.5)\hole[lu]\ar@{-}[ru]&
\tddeux{$2$}{$1,3$}\hspace{2mm}\ar@{-}[u]\ar@{-}[ru]&\tddeux{$1,3$}{$2$}\hspace{2mm}\ar@{-}|(.33)\hole|(.5)\hole|(.66)\hole[llu]\ar@{-}[ru]&
\tddeux{$3$}{$1,2$}\hspace{2mm}\ar@{-}[u]\ar@{-}[ru]&\tddeux{$2,3$}{$1$}\hspace{2mm}\ar@{-}|(.33)\hole|(.5)\hole|(.66)\hole[llu]\ar@{-}[u]\\
&&\tdun{$1,2,3$}\hspace{5mm} \ar@{-}[llu]\ar@{-}[lu]\ar@{-}[u]\ar@{-}[ru]\ar@{-}[rru]\ar@{-}[rrru]}$$

\textbf{Remark.} Let us take $P,Q\in \dqp$, such that $P\leq Q$. Then:
\begin{align*}
\mbox{ $P$ or $Q$ is special}&\Longleftrightarrow\mbox{$P$ and $Q$ are special}.
\end{align*}

\begin{lemma}
Let us set $b(P):=\sum\limits_{P'\in B(P)}P'$. Then:
$$\Delta\circ b(P)=(b\otimes b)\circ \Delta_<(P).$$
\end{lemma}

\begin{proof} Indeed, $\Delta\circ b(P)$ is a sum of terms $P'_{|O^c}\otimes P'_{|O}$ over open subsets in $Top(P')$. However, by definition of blow ups, open subsets $O$ of $P'$ are preopen sets of $P$ and there is a canonical embedding of the set of pairs $(P'_{|O^c}, P'_{|O})$ in the expansion of $\Delta\circ b(P)$ into the set of pairs 
$$\coprod_{O\in Top_<(P)}B(P_{|O^c})\times B(P_{|O}).$$

Conversely, any element in this last set defines uniquely a pair $(U,P')$ where $U$ is an open set of a blow up $P'$ of $P$. Indeed, let $O\in Top_<(P)$, $T$ be a blow up of $P_{|O}$ and $W$ a blow up of $P_{|O^c}$. Set $U:=T$ and define
the preorder $\leq_1'$ on $P'$ by
$$\forall (i,j)\in (O^c\times O^c) \cup (O\times O),\ i\leq_1' j \Leftrightarrow i\leq_1 j \ \text{in}\ T\ \text{ or}\  W,$$
$$\forall (i,j)\in O^c\times O, \ i\leq_1' j \Leftrightarrow i\leq_1 j \ \text{in}\ P\ \text{when}\ i\not\sim_1 j\ \text{in} \ P, \ i<_1 j\ \text{else}.$$
\end{proof}

\begin{prop}
We consider the map:
\begin{align*}
\Upsilon:&\left\{\begin{array}{rcl}
\h_\dqp&\longrightarrow&\h_\dqp\\
P&\longrightarrow&\displaystyle b(P).
\end{array}\right.
\end{align*}
Then $\Upsilon$ is a Hopf algebra isomorphism from $(\h_\dqp,m,\Delta_<)$ to $(\h_\dqp,m,\Delta)$. 
\end{prop}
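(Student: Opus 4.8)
The plan is to verify, in turn, that $\Upsilon$ is a morphism of coalgebras, then of algebras, then a linear bijection, and finally to invoke the standard fact that a bialgebra isomorphism between graded connected Hopf algebras is automatically a Hopf algebra isomorphism. The coalgebra compatibility $\Delta\circ\Upsilon=(\Upsilon\otimes\Upsilon)\circ\Delta_<$ is nothing but the preceding Lemma, since $\Upsilon=b$ by definition; so this step is already done. It also records that $\Upsilon$ preserves the grading by cardinality (every blow up of $P$ has underlying set $V(P)$), whence $\Upsilon$ commutes with the counit, and $\Upsilon(1)=b(1)=1$ because the empty double quasi-poset has no non-singleton $\sim_1$-class to blow up.

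For the algebra morphism property I would establish $b(PQ)=b(P)b(Q)$. The key point is that in the product $PQ$ the preorder $\leq_1$ is the disjoint union of the two preorders, so the $\sim_1$-equivalence classes of $PQ$ are exactly the $\sim_1$-classes of $P$ together with those of $Q$, with no class mixing elements of the two sets. Since an elementary blow up only chooses a total preorder on a single $\sim_1$-class and leaves $\leq_2$ untouched, the choices available on the $P$-classes and on the $Q$-classes are independent, and the map $(P',Q')\mapsto P'Q'$ is a multiplicity-preserving bijection from $B(P)\times B(Q)$ onto $B(PQ)$. Summing over all choices yields $b(PQ)=b(P)b(Q)$.

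The main point is bijectivity, and here I would argue by unitriangularity with respect to the order $\leq$ on $\dqp$ introduced above. Every blow up $P'$ of $P$ satisfies $P\leq P'$, so one may write $b(P)=\sum_{Q\geq P}c_{P,Q}\,Q$ with $c_{P,Q}\in\N$. Moreover the coefficient $c_{P,P}$ equals $1$: the only blow up of $P$ isomorphic to $P$ is the one obtained by choosing the totally equivalent preorder on each class, since any choice that strictly refines some class increases the number of $\sim_1$-classes (exactly as used in the proof that $\leq$ is an order) and hence produces a $Q$ with $Q>P$. Fixing for each $n$ a linear extension of the finite poset $(\dqp(n),\leq)$, the matrix of the restriction of $\Upsilon$ to the homogeneous component of degree $n$ of $\h_\dqp$ is therefore unitriangular with $1$'s on the diagonal, hence invertible. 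As $\Upsilon$ is graded, it is a linear bijection on all of $\h_\dqp$.

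Finally, $\Upsilon$ is then a bijective morphism of bialgebras from $(\h_\dqp,m,\Delta_<)$ to $(\h_\dqp,m,\Delta)$. Both are graded connected Hopf algebras, so each antipode is the unique convolution inverse of the identity and is determined by the bialgebra structure alone; a bialgebra isomorphism therefore intertwines the antipodes automatically. Hence $\Upsilon$ is a Hopf algebra isomorphism, as claimed. The only step demanding real care is the unitriangularity argument: one must be sure that $P$ occurs in $b(P)$ with coefficient exactly $1$ and that every other blow up is strictly larger for $\leq$, which is precisely what guarantees invertibility.
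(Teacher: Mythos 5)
Your proof is correct and takes essentially the same route as the paper's: coproduct compatibility from the preceding Lemma, multiplicativity via the bijection $B(P)\times B(Q)\cong B(PQ)$, and invertibility from the fact that $b(P)$ equals $P$ plus strictly larger terms for the order $\leq$, giving a unitriangular matrix in each degree. The extra details you supply (the coefficient $c_{P,P}=1$ and the automatic compatibility with antipodes) simply make explicit what the paper leaves implicit.
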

\begin{proof}
The blow ups of a product $PQ$ are in a straightforward bijection with the products of blow ups of $P$ and $Q$, the multiplicativity of $\Upsilon$ follows.
That $\Upsilon$ is an isomorphism follows then from its invertibility as a linear map (recall that $b(P)$ is the sum of $P$ with higher order terms for the order $\leq$ on $\dqp$) and from the previous Lemma. \end{proof}

\section{Pictures and patterns}\label{pprs}

Due to the possible existence of equivalent elements for $\leq_1$ or $\leq_2$, the theory of pictures for double quasi-posets (to be introduced in the present section) allows for much more flexibility than the one of pictures for double posets. In particular it allows for various approaches to encode pictorially combinatorial objects such as surjections, tableaux with repeated entries, and so on.
It also provides a new framework (through the notion of patterns, also to be introduced) 
to deal with quotients under Young (and more generally parabolic) subgroups actions. Although the present article is mainly focused on combinatorial Hopf algebra structures, we expect these ideas and the associated algebraic structures to lead to new approaches to these classical topics.

\begin{defi}
Let $P,Q\in \dqp$.
\begin{itemize}
\item A prepicture between $P$ and $Q$ is a bijection $f:V(P)\longrightarrow V(Q)$ such that:
\begin{align*}
\forall i,j \in V(P),\:&i<_1 j\Longrightarrow f(i)<_2 f(j),& &f(i)<_1f(j)\Longrightarrow i<_2 j.
\end{align*}
The set of prepictures between $P$ and $Q$ is denoted by $Pic_<(P,Q)$.
\item A picture (or standard picture) between $P$ and $Q$ is a bijection $f:V(P)\longrightarrow V(Q)$ such that:
\begin{align*}
\forall i,j \in V(P),\:&i\leq_1 j\Longrightarrow f(i)\leq_2 f(j),&&i<_1 j\Longrightarrow f(i)<_2 f(j),\\
&f(i)\leq_1f(j)\Longrightarrow i\leq_2 j,&&f(i)<_1f(j)\Longrightarrow i<_2 j.
\end{align*}
The set of pictures between $P$ and $Q$ is denoted by $Pic(P,Q)$.
\item A semi-standard picture between $P$ and $Q$ is a bijection $f:V(P)\longrightarrow V(Q)$ such that:
\begin{align*}
\forall i,j \in V(P),\:&i<_1 j\Longrightarrow f(i)\leq_2 f(j),f(i)<_1f(j)\Longrightarrow i\leq_2 j.
\end{align*}
The set of semi-standard pictures between $P$ and $Q$ is denoted by $Pic_{ss}(P,Q)$.
\end{itemize}\end{defi}

\textbf{Remarks.} \begin{enumerate}
\item  Obviously, $Pic(P,Q)\subseteq Pic_<(P,Q)$; moreover:
$$Pic_<(P,Q)=Pic_<(pos(P),pos(Q))=Pic(pos(P),pos(Q)).$$
\item If $\leq_2$ are orders for both $P$ and $Q$, then any bijection $f:V(P)\longrightarrow V(Q)$ is a picture between $P$ and $Q$ if, and only if:
\begin{align*}
\forall i,j \in V(P),\:&i\leq_1 j\Longrightarrow f(i)\leq_2 f(j),&&f(i)\leq_1f(j)\Longrightarrow i\leq_2 j.
\end{align*}\end{enumerate}

\textbf{Example.} Let $P,Q\in\tqp(n)$. Then,
$$Pic(P,Q)=Pic_<(P,Q)=Pic_{ss}(P,Q)\cong S_n.$$
This generalizes the correspondence between permutations and pictures of trivial double posets,
instrumental in the picture-theoretical reformulation of the RS correspondence between permutations and pairs of standard tableaux.

\textbf{Example.} Let $\lambda$ be a Young diagram with $n$ entries, embedded in $\N^2$. We write $Q_\lambda$ (resp. $P_\lambda$, for later use) for the double quasi-poset with $V(Q_\lambda)=\lambda$, equipped with the order $(x,y)\leq (z,t)\Leftrightarrow x\leq z \ {\text {and}} \ y\leq t$ and an arbitrary preorder $\leq_2$, respectively the strict order $\leq_2$ obtained by labelling the entries of $\lambda$ in the reading order: graphically, for $$\lambda=\young(\ ,\ \ \ ,\ \ \ ), \ \leq_2 \text{is given by }
\young(1,234,567).$$
\begin{enumerate}
\item Let $Q\in \tqp$ with $V(Q)=[n]$ and $\leq_2$ the natural order. Then, $Pic(Q_\lambda,Q)=Pic_<(Q_\lambda,Q)=Pic_{ss}(Q_\lambda,Q)$ is in bijection with standard tableaux of shape $\lambda$.
\item Let $Q\in \tqp$ with $V(Q)=[n]$ and $\leq_2$ an arbitrary  order. Then, $Pic(Q_\lambda,Q)=Pic_<(Q_\lambda,Q)=Pic_{ss}(Q_\lambda,Q)$ is in bijection with tableaux of shape $\lambda$ such that the entries are increasing from bottom to top and left to right for $\leq_2$.
\item Let $Q\in \tqp$ with $V(Q)=[n]$ and $\leq_2$ be an arbitrary preorder. Then, $Pic(Q_\lambda,Q)=Pic_<(Q_\lambda,Q)$ is in bijection with  tableaux of shape $\lambda$ such that the entries are strictly increasing from bottom to top and left to right for $\leq_2$.
Instead, $Pic_{ss}(Q_\lambda,Q)$ is in bijection with tableaux of shape $\lambda$ such that the entries are weakly increasing from bottom to top and left to right for $\leq_2$.
\end{enumerate}

\begin{lemma}
Let $P,Q\in\dqp$. The sets $Pic(P,Q),\ Pic_<(P,Q),\ Pic_{ss}(P,Q)$ are $Aut(P)^{op}$ (resp. $Aut(Q)$)-sets
by right (resp. left) composition, where $Aut(P)^{op}$ is the opposite of the group of automorphisms of $P$.
\end{lemma}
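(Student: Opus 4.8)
The plan is to define both actions by composition and then reduce the whole statement to a single stability check. For $g\in Aut(Q)$ set $g\cdot f:=g\circ f$, and for $h\in Aut(P)$ set $f\cdot h:=f\circ h$; both are again bijections $V(P)\to V(Q)$. The identity acts trivially, and associativity of composition gives $g_1\circ(g_2\circ f)=(g_1\circ g_2)\circ f$ and $(f\circ h_1)\circ h_2=f\circ(h_1\circ h_2)$, so left composition makes each set a left $Aut(Q)$-set and right composition makes it a right $Aut(P)$-set, i.e. a left $Aut(P)^{op}$-set, exactly as stated. Associativity also yields $(g\circ f)\circ h=g\circ(f\circ h)$, so the two actions commute automatically. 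Thus everything comes down to showing that each of $Pic(P,Q)$, $Pic_<(P,Q)$, $Pic_{ss}(P,Q)$ is stable under $f\mapsto g\circ f$ and under $f\mapsto f\circ h$.

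The key point is the characterisation of automorphisms recorded just after the definition of a morphism: $\phi\in Aut(P)$ (resp. $Aut(Q)$) satisfies $i\leq_k j\Leftrightarrow \phi(i)\leq_k \phi(j)$ for $k=1,2$. Being an equivalence in both directions, $\phi$ preserves and reflects each preorder, hence also preserves and reflects the associated strict relations, $i<_k j\Leftrightarrow \phi(i)<_k\phi(j)$. It is precisely this two-sided compatibility with both $\leq_k$ and $<_k$ that is needed; a mere (one-directional) morphism would not suffice.

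For the left action take $g\in Aut(Q)$ and $f\in Pic(P,Q)$. Each defining implication of a picture chains with the corresponding equivalence for $g$ on the target side: from $i\leq_1 j\Rightarrow f(i)\leq_2 f(j)$ and $f(i)\leq_2 f(j)\Leftrightarrow gf(i)\leq_2 gf(j)$ we obtain $i\leq_1 j\Rightarrow gf(i)\leq_2 gf(j)$; from $gf(i)\leq_1 gf(j)\Leftrightarrow f(i)\leq_1 f(j)$ and $f(i)\leq_1 f(j)\Rightarrow i\leq_2 j$ we obtain $gf(i)\leq_1 gf(j)\Rightarrow i\leq_2 j$; the two strict conditions are treated identically with $\leq$ replaced by $<$. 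For the right action take $h\in Aut(P)$ and precompose, now using the equivalence for $h$ on the source side: $i\leq_1 j\Leftrightarrow h(i)\leq_1 h(j)$ followed by $h(i)\leq_1 h(j)\Rightarrow f(h(i))\leq_2 f(h(j))$ gives the first implication for $f\circ h$, and symmetrically for the other three. The same two chains, with the conclusions $<_2$ weakened to $\leq_2$, prove stability of $Pic_{ss}$, and restricting attention to the two strict implications proves stability of $Pic_<$.

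I do not expect a genuine obstacle here: the content is entirely the stability verification, and the only points requiring care are the bookkeeping of handedness (why right composition yields $Aut(P)^{op}$ rather than $Aut(P)$) and the observation that one must invoke the full equivalences defining an automorphism, since preservation alone would break the reflecting half of each picture condition.
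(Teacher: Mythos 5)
Your proposal is correct, and it matches the paper's (implicit) approach: the paper states this lemma without any proof, treating it as immediate from the definitions, and what you have written out is exactly the routine verification being omitted — the group-action axioms, the bookkeeping that right composition gives a left $Aut(P)^{op}$-action, and the stability of $Pic(P,Q)$, $Pic_<(P,Q)$, $Pic_{ss}(P,Q)$ under pre- and post-composition, using that automorphisms preserve \emph{and} reflect both preorders and hence also the strict relations $<_1$, $<_2$. Your closing remark correctly identifies the one point of substance, namely that the two-sided (equivalence) property of automorphisms is genuinely needed for the ``reflecting'' half of each picture condition, so there is nothing to add.
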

\begin{defi}
Let $P,Q\in \dqp$, two bijections $f,g$ from $V(P)$ to $V(Q)$ are called equivalent (written $f\sim g$) if, and only if, there exists
$(\phi,\psi)\in Aut(P)\times Aut(Q)$ such that $f=\psi \circ g\circ \phi$. The quotient $Pat(P,Q):=Aut(Q)\setminus Pic_{ss}(P,Q)/Aut(P)$ is called the set of patterns between $P$ and $Q$.
\end{defi}

\textbf{Example.} The notations are as in the previous example, we assume furthermore that $Q_\lambda=P_\lambda$ and that, on $Q$, the preorder
 $\leq_2$ is total and increasing (e.g. $1\sim_2 2<_2 3\sim_2 4\sim_2 5<_2 6\sim_2 7<_2 8$) and identifies therefore with a surjection ($f(1)=f(2)=1,f(3)=f(4)=f(5)=2,f(6)=f(7)=3, f(8)=4)$), resp. an increasing packed word ($11222334$), resp. a composition ($\mathbf{n}:=(2,3,2,1)$). For later use, we also set $Q(\mathbf{n}):=Q$. Then, $Aut(P_\lambda)=\{1\}$ (because the second order on $P_\lambda$ is strict) and $Aut(Q(\mathbf{n}))$ is, up to a canonical isomorphism, a Young subgroup of $S_n$ (e.g. $Aut(Q(\mathbf{n}))\cong S_\mathbf{n} :=S_2\times S_3\times S_2\times S_1$). Therefore, $Pat(P_\lambda,Q(\mathbf{n}))$ is in bijection with the set $\lambda(\mathbf{n})$ of tableaux of shape $\lambda$ decorated by the packed word $11222334$ and such that the entries are row and column-wise weakly increasing: for example,  
$$\mbox{for }\lambda=\young(\ \ ,\ \ \ ,\ \ \ ) \ \  \text{, a tableau such as }
\young(24,223,113)\:.$$

 \begin{lemma}
For any composition $\mathbf{n}$ of $n$, $Pat(Q(\mathbf n),P_{[n]})$ is in bijection with the set of surjections $f$ from $[n]$ to $[k]$
 such that $|f^{-1}(1)|=n_1,\dots,|f^{-1}(k)|=n_k$.
\end{lemma}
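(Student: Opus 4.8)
The plan is to unwind the definition $Pat(Q(\mathbf n),P_{[n]})=Aut(P_{[n]})\backslash Pic_{ss}(Q(\mathbf n),P_{[n]})/Aut(Q(\mathbf n))$ and to identify each of its three ingredients explicitly. First I would record that both $Q(\mathbf n)$ and $P_{[n]}$ lie in $\tqp(n)$: in each the first preorder $\leq_1$ is trivial, while $\leq_2$ is, respectively, the total preorder whose classes $C_1<_2\cdots<_2 C_k$ have cardinalities $n_1,\dots,n_k$, and the natural total order on $[n]$. Two consequences follow at once. On the one hand, by the Example following the definition of pictures (for $P,Q\in\tqp(n)$ one has $Pic_{ss}(P,Q)\cong S_n$, the defining implications being vacuous when $\leq_1$ is trivial on both sides), $Pic_{ss}(Q(\mathbf n),P_{[n]})$ is the set of all bijections $f\colon V(Q(\mathbf n))\to V(P_{[n]})=[n]$. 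On the other hand, an automorphism must preserve $\leq_2$ in both directions; hence $Aut(P_{[n]})=\{1\}$ (only the identity preserves a total order), while $Aut(Q(\mathbf n))$ is the stabiliser of the partition into $\sim_2$-classes, namely the Young subgroup $S_{\mathbf n}\cong S_{n_1}\times\cdots\times S_{n_k}$. Thus $Pat(Q(\mathbf n),P_{[n]})$ is the set of orbits of $S_n$ under right multiplication by $S_{\mathbf n}$, i.e. the coset space $S_n/S_{\mathbf n}$.

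Next I would produce the bijection with surjections directly, rather than merely counting. Let $\pi_{\mathbf n}\colon V(Q(\mathbf n))\to[k]$ be the canonical projection sending each element to the index of its $\sim_2$-class, and set $\Phi(f):=\pi_{\mathbf n}\circ f^{-1}$ for $f\in Pic_{ss}(Q(\mathbf n),P_{[n]})$. Since $f$ is a bijection, the fibre $\Phi(f)^{-1}(l)=f(C_l)$ has cardinality $n_l$, so $\Phi(f)$ is a surjection $[n]\to[k]$ with $|\Phi(f)^{-1}(l)|=n_l$ for all $l$. I would then check that $\Phi$ is constant on the relevant orbits: postcomposition by $Aut(P_{[n]})=\{1\}$ is trivial, and if $\phi\in Aut(Q(\mathbf n))=S_{\mathbf n}$ then $\pi_{\mathbf n}\circ\phi^{-1}=\pi_{\mathbf n}$ because $\phi$ permutes each class, whence $\Phi(f\circ\phi)=\pi_{\mathbf n}\circ\phi^{-1}\circ f^{-1}=\Phi(f)$. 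Therefore $\Phi$ descends to a map $\bar\Phi$ on $Pat(Q(\mathbf n),P_{[n]})$.

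Finally I would prove that $\bar\Phi$ is a bijection onto the surjections of the prescribed fibre type. For injectivity, suppose $\Phi(f)=\Phi(f')=:g$; then $f^{-1}(p)$ and $f'^{-1}(p)$ lie in the same class $C_{g(p)}$ for every $p\in[n]$, so $\phi:=f^{-1}\circ f'$ sends each element into its own class, i.e. $\phi\in S_{\mathbf n}=Aut(Q(\mathbf n))$, and $f\circ\phi=f'$; hence $f$ and $f'$ define the same pattern. For surjectivity, given a surjection $g\colon[n]\to[k]$ with $|g^{-1}(l)|=n_l$, the equalities $|g^{-1}(l)|=|C_l|$ let me choose a bijection $f^{-1}\colon[n]\to V(Q(\mathbf n))$ carrying $g^{-1}(l)$ onto $C_l$ for each $l$; the resulting $f$ is a semi-standard picture (every bijection is) and satisfies $\Phi(f)=g$. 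This gives the announced bijection.

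The computations are routine; the only point that needs care is the first paragraph, where one must correctly identify the two automorphism groups and observe that, both objects being trivial quasi-posets, the semistandard-picture conditions are vacuous, so that $Pic_{ss}(Q(\mathbf n),P_{[n]})$ is the full symmetric group. Once this is in place, the statement is exactly the classical identification of $S_n/S_{\mathbf n}$ with surjections of fibre type $\mathbf n$, realised here through the explicit map $\Phi$.
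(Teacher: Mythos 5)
Your proof is correct and follows essentially the same route as the paper: identify $Pic_{ss}(Q(\mathbf n),P_{[n]})\cong S_n$ (the defining conditions being vacuous since both first preorders are trivial), note $Aut(P_{[n]})=\{1\}$ and $Aut(Q(\mathbf n))\cong S_{\mathbf n}$, so that the pattern set becomes the coset space $S_n/S_{\mathbf n}$. The only difference is that you write out explicitly the classical bijection between $S_n/S_{\mathbf n}$ and surjections of fibre type $\mathbf n$ (via $\Phi(f)=\pi_{\mathbf n}\circ f^{-1}$), which the paper simply invokes as ``the usual bijection''.
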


When $\mathbf{n}=(1,\dots,1)$, $Q(\mathbf n)=P_{[n]}$, sets of patterns and pictures identify,  and we recover 
$Pat(Q(\mathbf n),P_{[n]})=Pic(Q(\mathbf n),P_{[n]})\cong S_n$.

In general, $Pic_{ss}(Q(\mathbf n),P_{[n]})\cong S_n$ and $Aut(Q(\mathbf n))\cong S_{\mathbf n}:=S_{n_1}\times \dots\times S_{n_k}$. The result follows from the usual bijection between the coset $S_n/ S_{\mathbf n}$ and the set of surjections from $[n]$ to $[k]$ such that $|f^{-1}(1)|=n_1,\dots,|f^{-1}(k)|=n_k$.

\section{Pairings and self-duality}\label{psdu}
We depart from now on from diagrammatics and topics such as the RS correspondence to focus on the algebraic structures underlying the theory of pictures for double quasi-posets. The present section investigates duality phenomena.

\begin{lemma}
For all $P,Q,R\in \dqp$ and $f\in Pic_<(PQ,R)$ (resp. $f\in Pic(PQ,R)$), we have $f(V(Q))\in Top_<(R)$ (resp. $Top(R)$).
\end{lemma}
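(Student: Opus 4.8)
The plan is to verify the defining condition of a preopen (resp. open) set directly, transporting it through the bijection $f$ and then extracting a contradiction from the join-type asymmetry built into the second preorder of the product $PQ$.

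First I would record that $f$ is a bijection from $V(PQ)=V(P)\sqcup V(Q)$ onto $V(R)$, so that every element of $V(R)$ is uniquely of the form $f(i)$ with $i\in V(P)$ or $i\in V(Q)$, and $f(V(Q))=V(R)\setminus f(V(P))$. To show $f(V(Q))\in Top_<(R)$ it then suffices to take $a,b\in V(R)$ with $a<_1 b$ and $a\in f(V(Q))$ and to prove $b\in f(V(Q))$; the open case is identical, with $<_1$ replaced by $\leq_1$ throughout.

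Next I would pull back along $f$, writing $a=f(i)$ and $b=f(j)$, so that $i\in V(Q)$ and $f(i)<_1 f(j)$ in $R$. Arguing by contradiction, suppose $b\notin f(V(Q))$, i.e. $j\in V(P)$. The defining implication of a prepicture yields $f(i)<_1 f(j)\Rightarrow i<_2 j$ in $PQ$ (for pictures, one uses instead the implication $f(i)\leq_1 f(j)\Rightarrow i\leq_2 j$). But inspecting the definition of $\leq_2$ on $V(P)\sqcup V(Q)$, the only way a relation $i\leq_2 j$ can cross the two blocks is when $i\in V(P)$ and $j\in V(Q)$; here $i\in V(Q)$ and $j\in V(P)$, so $i\leq_2 j$ fails, and a fortiori $i<_2 j$ fails. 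This contradiction forces $j\in V(Q)$, that is $b\in f(V(Q))$, as desired.

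I do not expect a genuine obstacle, since the argument is essentially a one-line unwinding of definitions. The only points demanding care are bookkeeping ones: selecting, among the four defining implications of a picture, the one that reverses $\leq_1$ from the target $R$ back to the source $PQ$, and observing that the failure of $i\leq_2 j$ already kills $i<_2 j$, so that the prepicture and picture cases are dispatched uniformly by the asymmetry of the join product on $\leq_2$.
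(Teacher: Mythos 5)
Your proof is correct and follows essentially the same route as the paper's: pull $a<_1 b$ (resp.\ $a\leq_1 b$) back through $f$, apply the prepicture (resp.\ picture) implication to get $i<_2 j$ (resp.\ $i\leq_2 j$) in $PQ$, and conclude $j\in V(Q)$ from the fact that the join-type definition of $\leq_2$ on $V(P)\sqcup V(Q)$ admits no relation from $V(Q)$ into $V(P)$. The only cosmetic difference is that you argue by contradiction where the paper reads off $j\in V(Q)$ directly from the definition of $\leq_2$ on the product.
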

\begin{proof}
We put $O=f(V(Q))$. Let $i',j'\in V(R)$, with $i'\in O$ and $i'<_1 j'$.
We put $i'=f(i)$ and $j'=f(j)$. Then $i\in V(Q)$ and $f(i)<_1 f(j)$, so $i<_2 j$, with $j\in V(Q)$, and finally $j'\in O$: $O\in Top_<(R)$. The same argument applies \it mutatis mutandis \rm for pictures. \end{proof}

\begin{prop}
For all $P,Q\in \dqp$, we put:
\begin{align*}
\langle P,Q\rangle&=\sharp Pic(P,Q),&\langle P,Q\rangle_<&=\sharp Pic_<(P,Q).
\end{align*}
$\langle-,-\rangle_<$  and $\langle-,-\rangle$ are symmetric Hopf pairings on, respectively, $(\h_\dqp,m,\Delta_<)$ and $(\h_\dqp,m,\Delta)$.
Moreover, for all $x,y\in \h_\dqp$:
\begin{align*}
\langle x,y\rangle_<&=\langle pos(x),pos(y)\rangle_<=\langle pos(x),pos(y)\rangle.
\end{align*}
\end{prop}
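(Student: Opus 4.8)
The plan is to verify the three defining properties of a Hopf pairing---symmetry, the two product--coproduct compatibilities, and the counit conditions---directly on the sets of pictures, and only then to pass to cardinalities; the $pos$-identities will fall out of the Remarks following the definition of pictures. I begin with symmetry. Given $f\in Pic(P,Q)$, I claim $f^{-1}\in Pic(Q,P)$: setting $i=f^{-1}(i')$ and $j=f^{-1}(j')$, the four implications asserting that $f^{-1}$ is a picture are exactly the four implications asserting that $f$ is a picture, with the roles of source and target interchanged. Hence $f\mapsto f^{-1}$ is an involution exchanging $Pic(P,Q)$ and $Pic(Q,P)$, so $\langle P,Q\rangle=\langle Q,P\rangle$; the same argument applied to the two prepicture conditions gives symmetry of $\langle-,-\rangle_<$.

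The core is the compatibility $\langle PQ,R\rangle=\langle P\otimes Q,\Delta(R)\rangle$, which I would establish as a bijection
\[
Pic(PQ,R)\;\cong\;\coprod_{O\in Top(R)}Pic(P,R_{\mid V(R)\setminus O})\times Pic(Q,R_{\mid O}).
\]
Given $f\in Pic(PQ,R)$, the Lemma immediately preceding the Proposition yields $O:=f(V(Q))\in Top(R)$, and restricting $f$ to $V(P)$ and to $V(Q)$ produces pictures $P\to R_{\mid V(R)\setminus O}$ and $Q\to R_{\mid O}$ (this uses only that $\leq_1$ and $\leq_2$ restrict block-wise, both in $PQ$ and in $R$). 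Conversely, from $O\in Top(R)$ and a pair $(g,h)$ of such pictures I would glue $g$ and $h$ into a bijection $f\colon V(P)\sqcup V(Q)\to V(R)$ and verify $f\in Pic(PQ,R)$ by a case analysis on which blocks $i$ and $j$ lie in.

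The only non-automatic case---and the step I expect to be the main obstacle---is when $i$ and $j$ lie in opposite blocks. By definition of the product no cross-block pair is $\leq_1$-comparable in $PQ$, while $i<_2 j$ is forced whenever $i\in V(P)$ and $j\in V(Q)$; the delicate situation is $i\in V(Q)$, $j\in V(P)$, where the conclusion $i\leq_2 j$ in $PQ$ is \emph{false}, so one must instead rule out the hypothesis. This is precisely where openness enters: from $f(i)\in O$ and $f(i)\leq_1 f(j)$ in $R$ one gets $f(j)\in O$, contradicting $f(j)\in V(R)\setminus O$. Thus the defining cross-block relation of the product dovetails with the open-set condition on $R$, and this is exactly what makes the two constructions mutually inverse.

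Finally I would assemble the rest. The second compatibility $\langle P,QR\rangle=\langle\Delta(P),Q\otimes R\rangle$ follows formally from the first together with symmetry. The counit conditions hold because $Pic(1,P)$, the set of bijections out of the empty set, is nonempty exactly when $P=1$, giving $\langle 1,P\rangle=\epsilon(P)$ and, symmetrically, $\langle P,1\rangle=\epsilon(P)$. The prepicture statement $\langle PQ,R\rangle_<=\langle P\otimes Q,\Delta_<(R)\rangle_<$ is proved verbatim, replacing $Top(R)$ by $Top_<(R)$ and invoking preopenness at the crucial step. Lastly, the first Remark following the definition of pictures gives $Pic_<(P,Q)=Pic_<(pos(P),pos(Q))=Pic(pos(P),pos(Q))$; taking cardinalities yields $\langle P,Q\rangle_<=\langle pos(P),pos(Q)\rangle_<=\langle pos(P),pos(Q)\rangle$ on basis elements, and bilinearity extends this to all $x,y\in\h_\dqp$.
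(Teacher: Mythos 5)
Your proof is correct and follows essentially the same route as the paper: symmetry via $f\mapsto f^{-1}$, the key bijection between $Pic(PQ,R)$ and $\coprod_{O\in Top(R)}Pic(P,R_{\mid O^c})\times Pic(Q,R_{\mid O})$ built on the lemma that $f(V(Q))$ is (pre)open, and the $pos$-identities read off from the Remark after the definition of pictures. In fact your cross-block case analysis explicitly supplies the verification that the paper leaves to the reader (``we let the reader check that $f\in Pic_<(PQ,R)$''), so nothing is missing.
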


\begin{proof} Let $P,Q \in \dqp$. The map $f\mapsto f^{-1}$ is a bijection from $Pic_<(P,Q)$ to $Pic_<(Q,P)$ and from $Pic(P,Q)$ to $Pic(Q,P)$.
So $\langle P,Q\rangle_<=\langle Q,P\rangle_<$ and $\langle P,Q\rangle=\langle Q,P\rangle$.\\

Let $P,Q,R\in \dqp$, we set ${O}^c:=V(R)\setminus O$ and define:
\begin{align*}
\theta:&\left\{\begin{array}{rcl}
Pic_<(PQ,R)&\longrightarrow&\displaystyle \bigsqcup_{O\in Top_<(R)} Pic_<(P,R_{\mid O^c})\times Pic_<(Q,R_{\mid O})\\
f&\longrightarrow&(f_{\mid P},f_{\mid Q}).
\end{array}\right.
\end{align*}
By the previous Lemma, and since by restriction, for $O:=f(V(Q))$, $f_{\mid P} \in Pic_<(P,R_{\mid O^c})$ and $f_{\mid Q}\in Pic_<(Q,R_{\mid O})$, $\theta$ is well-defined. By its very definition, it is injective. 
Let $O\in Top_<(R)$, $(f_1,f_2)\in  Pic_<(P,R_{\mid O^c})\times Pic_<(Q,R_{\mid O})$.
We denote by $f$ the unique bijection from $V(PQ)$ to $V(R)$ such that $f_{\mid P}=f_1$ and $f_{\mid Q}=f_2$. We let the reader check that $f\in Pic_<(PQ,R)$; $\theta$ is bijective and we obtain:
\begin{align*}
\langle PQ,R\rangle_<&=\sharp Pic_<(PQ,R)\\
&=\sum_{O\in Top_<(R)} \sharp Pic_<(P,R_{\mid O^c})\sharp Pic_<(Q,R_{\mid O})\\
&=\sum_{O\in Top_<(R)} \langle P,R_{\mid O^c})\rangle_<\langle Q,R_{\mid O}\rangle_<\\
&=\langle P\otimes Q,\Delta_<(R)\rangle_<.
\end{align*}
So $\langle-,-\rangle_<$ is a Hopf pairing. By restriction, one gets:
$$\theta(Pic(PQ,R))=\bigsqcup_{O\in Top(R)} Pic(P,R_{\mid O^c})\times Pic(Q,R_{\mid O});$$
that $\langle-,-\rangle$ is a Hopf pairing follows by similar arguments that we omit.

Moreover, for any $P,Q\in \dqp$:
\begin{align*}
\langle P,Q\rangle_<&=\sharp Pic_<(P,Q)=\sharp Pic_<(pos(P),pos(Q))=\sharp Pic(pos(P),pos(Q))\\
&=\langle pos(P),pos(Q)\rangle.
\end{align*}\end{proof}

\begin{defi}
The map $\iota:\h_\dqp\longrightarrow \h_\dqp$ is defined by $\iota(P)=(V(P),\leq_2,\leq_1)$ for any $P=(V(P),\leq_1,\leq_2)\in \dqp$.
\end{defi}

\begin{lemma}
For any double quasi-poset $P$, we put:
\begin{align*}
X_P&=\{(i,j)\in V(P)\mid i\leq_1 j\},&x_P&=\sharp X_P,\\
Y_P&=\{(i,j)\in V(P)\mid i\leq_2 j\},&y_P&=\sharp Y_P.
\end{align*}
\begin{enumerate}
\item Let $P,Q\in \dqp$, such that $\langle P,Q\rangle\neq 0$. Then:
\begin{itemize}
\item $x_P\leq y_Q$ and $x_Q\leq y_P$.
\item If moreover $x_P=y_Q$ and $x_Q=y_P$, then $Q=\iota(P)$.
\end{itemize}
\item For any $P\in \dqp$, $\langle P,\iota(P)\rangle\neq 0$.
\end{enumerate}\end{lemma}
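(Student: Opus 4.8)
The plan is to exploit a single picture $f\in Pic(P,Q)$ (which exists precisely because $\langle P,Q\rangle\neq 0$) to produce two cardinality‑preserving injections between the comparability sets of $P$ and $Q$, and then read off each assertion. First I would note that, $f$ being a bijection, the assignment $(i,j)\mapsto(f(i),f(j))$ is injective on $V(P)^2$; the picture axiom $i\leq_1 j\Rightarrow f(i)\leq_2 f(j)$ shows it maps $X_P$ into $Y_Q$, so that $x_P\leq y_Q$. Symmetrically, the axiom $f(i)\leq_1 f(j)\Rightarrow i\leq_2 j$ shows that $(i',j')\mapsto(f^{-1}(i'),f^{-1}(j'))$ maps $X_Q$ into $Y_P$, whence $x_Q\leq y_P$. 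This settles the first bullet of (1).

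For the second bullet I would observe that the equalities $x_P=y_Q$ and $x_Q=y_P$ force both of these injections to be bijections, since an injection between finite sets of equal cardinality is onto. Surjectivity of the first upgrades the implication $i\leq_1 j\Rightarrow f(i)\leq_2 f(j)$ to the biconditional $i\leq_1 j\Leftrightarrow f(i)\leq_2 f(j)$, and surjectivity of the second upgrades $f(i)\leq_1 f(j)\Rightarrow i\leq_2 j$ to $f(i)\leq_1 f(j)\Leftrightarrow i\leq_2 j$. Recalling that $\iota(P)=(V(P),\leq_2,\leq_1)$, these two equivalences say exactly that $f$ carries the first preorder $\leq_2$ of $\iota(P)$ onto the first preorder $\leq_1$ of $Q$, and the second preorder $\leq_1$ of $\iota(P)$ onto the second preorder $\leq_2$ of $Q$, in both directions. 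Hence $f$ is an isomorphism from $\iota(P)$ to $Q$, i.e. $Q=\iota(P)$ in $\dqp$. As the relations are preorders, the strict conditions $<_k$ need no separate treatment, being derived from the $\leq_k$.

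For part (2), I would simply test the identity map on $V(P)=V(\iota(P))$ against the four picture axioms. Because the first (resp. second) preorder of $\iota(P)$ is by definition $\leq_2$ (resp. $\leq_1$) of $P$, each axiom collapses to a tautology of the shape $i\leq_1 j\Rightarrow i\leq_1 j$ or $i\leq_2 j\Rightarrow i\leq_2 j$ (and likewise for the strict versions). Thus the identity lies in $Pic(P,\iota(P))$, giving $\langle P,\iota(P)\rangle\geq 1\neq 0$.

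The whole argument is essentially bookkeeping; the one step demanding genuine care is the second bullet of (1), where I must correctly pair the two biconditionals obtained from bijectivity of the counting maps with the two preorder comparisons that define an isomorphism onto $\iota(P)$, keeping straight which preorder of $\iota(P)$ corresponds to which preorder of $P$.
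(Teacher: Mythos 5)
Your proof is correct and takes essentially the same route as the paper's: pass from a picture $f$ to the map $(i,j)\mapsto(f(i),f(j))$, which injects $X_P$ into $Y_Q$ (and $X_Q$ into $Y_P$ via $f^{-1}$) to give the inequalities; when the cardinalities match, these injections are bijections, upgrading the picture implications to the biconditionals that make $f$ an isomorphism exchanging the roles of $\leq_1$ and $\leq_2$; and the identity map witnesses $\langle P,\iota(P)\rangle\neq 0$. The only cosmetic difference is that you present the isomorphism as $\iota(P)\to Q$ whereas the paper presents it as $P\to\iota(Q)$, which is the same statement since $\iota$ is an involution.
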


\begin{proof}
1. The set $Pic(P,Q)$ is non-empty. Let $f\in Pic(P,Q)$. We define:
\begin{align*}
F:&\left\{\begin{array}{rcl}
V(P)^2&\longrightarrow&V(Q)^2\\
(i,j)&\longrightarrow&(f(i),f(j)).
\end{array} \right.
\end{align*} 
As $f$ is bijective, $F$ is bijective. By definition of a picture, $F(X_P)\subseteq Y_Q$ and $F^{-1}(X_Q)\subseteq Y_P$,
so $x_P\leq y_Q$ and $x_Q\leq y_P$. If moreover $x_P=y_Q$ and $x_Q=y_P$, then $F(X_P)=Y_Q$ and $F^{-1}(X_Q)=Y_P$;
for any $i,j \in V(P)$:
\begin{align*}
i\leq_1 j \mbox{ in }P&\Longleftrightarrow f(i) \leq_2 f(j)\mbox{ in }Q\Longleftrightarrow f(i) \leq_1 f(j)\mbox{ in }\iota(Q)\\
f(i) \leq_2 f(j)\mbox{ in }\iota(Q)&\Longleftrightarrow f(i) \leq_1 f(j) \in Q\Longleftrightarrow i\leq_2 j\mbox{ in }P.
\end{align*} 
So $f$ is an isomorphism from $P$ to $\iota(Q)$: $P=\iota(Q)$ or, equivalently $Q=\iota(P)$.\\

2. If $P=\iota(Q)$, then $Id_{V(P)} \in Pic(P,Q)$, so $\langle P,Q\rangle\neq 0$. \end{proof}

\begin{prop}\label{propnd}
Let $\mathbf{X}\subseteq \dqp$ such that:
\begin{itemize}
\item $1\in \mathbf{X}$.
\item $x,y\in \mathbf{X}\Longrightarrow xy\in \mathbf{X}$.
\item $\forall P\in \mathbf{X}$, $\forall B\subseteq V(P)$, $P_{\mid B} \in \mathbf{X}$.
\end{itemize}
We denote by $\h_\mathbf{X}$ the subspace of $\h_\dqp$ generated by $\mathbf{X}$. Then $\h_\mathbf{X}$ 
is a Hopf subalgebra of both $(\h_\dqp,m,\Delta)$ and $(\h_\dqp,m,\Delta_<)$. If, moreover:
\begin{itemize}
\item $x\in \mathbf{X} \Longrightarrow \iota(x)\in \mathbf{X}$,
\end{itemize}
then  $\langle-,-\rangle_{\mid \h_\mathbf{X}}$ is non-degenerate, so $(\h_\mathbf{X},m,\Delta)$ is a graded self-dual Hopf algebra.
\end{prop}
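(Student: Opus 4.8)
The plan is to verify the Hopf-subalgebra claim directly from the closure hypotheses, and then to deduce non-degeneracy from the triangularity packaged in the previous Lemma. Concretely, $\h_\mathbf{X}$ is graded by cardinality (it inherits the grading of $\h_\dqp$), and it is connected since $1\in\mathbf{X}$ contributes the only degree-zero element. First I would check that $\h_\mathbf{X}$ is a subalgebra: the product $PQ$ of two elements of $\mathbf{X}$ lies in $\mathbf{X}$ by the second hypothesis, so $m(\h_\mathbf{X}\otimes\h_\mathbf{X})\subseteq \h_\mathbf{X}$. Next, for the coproduct, recall that both $\Delta(P)$ and $\Delta_<(P)$ are sums of terms $P_{\mid O^c}\otimes P_{\mid O}$ over (pre)open sets $O$; each factor is a restriction $P_{\mid B}$, which belongs to $\mathbf{X}$ by the third hypothesis. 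Hence $\Delta(\h_\mathbf{X})\subseteq\h_\mathbf{X}\otimes\h_\mathbf{X}$ and likewise for $\Delta_<$, so $\h_\mathbf{X}$ is a graded connected sub-bialgebra, and the antipode is then automatic for graded connected bialgebras. This establishes that $\h_\mathbf{X}$ is a Hopf subalgebra for both coproducts.

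The substance of the statement is the non-degeneracy of $\langle-,-\rangle$ restricted to $\h_\mathbf{X}$ under the extra hypothesis $\iota(\mathbf{X})\subseteq\mathbf{X}$. The strategy is to exhibit a triangular structure of the pairing matrix with respect to the order $\leq$ on $\dqp$, using the invariant $x_P$ (and its counterpart $y_P$) from the previous Lemma. The idea is that for a fixed cardinality $n$, the pairing $\langle P,Q\rangle=\sharp Pic(P,Q)$ vanishes unless certain numerical inequalities hold, and the previous Lemma tells us exactly this: if $\langle P,Q\rangle\neq 0$ then $x_P\leq y_Q$ and $x_Q\leq y_P$, with equality forcing $Q=\iota(P)$; moreover $\langle P,\iota(P)\rangle\neq 0$ always. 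So I would order the elements $P$ of $\mathbf{X}\cap\dqp(n)$ by the value of $x_P$ (say, by decreasing $x_P$ for the row index and by the matching invariant for $\iota$), and pair each $P$ against the candidate $\iota(P)$, which lies in $\mathbf{X}$ precisely because of the closure under $\iota$. With respect to this indexing the Gram matrix of $\langle-,-\rangle$ is block-triangular: off the ``diagonal'' determined by $P\mapsto\iota(P)$ the entries are forced to vanish by the inequalities, and on that diagonal the entries $\langle P,\iota(P)\rangle$ are strictly positive. A matrix that is triangular with nonzero diagonal in this sense is invertible, which is exactly non-degeneracy of the pairing in each graded piece, hence overall.

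The main obstacle I anticipate is making the triangularity argument genuinely rigorous rather than heuristic: one must choose a total order on $\mathbf{X}\cap\dqp(n)$ refining the partial data $(x_P,y_P)$ so that the map $P\mapsto\iota(P)$ becomes an order-reversing involution pairing row-indices to column-indices, and so that every strictly off-diagonal entry is killed by the Lemma. The delicate point is that the Lemma gives a two-sided constraint ($x_P\leq y_Q$ \emph{and} $x_Q\leq y_P$) and the equality case requires \emph{both} $x_P=y_Q$ and $x_Q=y_P$; to extract honest triangularity I would work on the finite set $\mathbf{X}\cap\dqp(n)$ and argue that the involution $\iota$ is a bijection of this set (using $\iota^2=\mathrm{id}$ and $\iota(\mathbf{X})\subseteq\mathbf{X}$), then show that with rows indexed by $P$ and columns indexed by $\iota(P)$ the matrix $\bigl(\langle P,\iota(Q)\rangle\bigr)_{P,Q}$ is triangular for an order based on $x_P$, with nonzero diagonal $\langle P,\iota(P)\rangle$. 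Since a linear involution composed with the pairing produces an invertible Gram matrix, this yields non-degeneracy; and because $\langle-,-\rangle$ is already a symmetric Hopf pairing by the earlier Proposition, non-degeneracy upgrades $(\h_\mathbf{X},m,\Delta)$ to a graded self-dual Hopf algebra, as claimed.
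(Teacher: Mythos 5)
Your proposal is correct and takes essentially the same route as the paper: the Hopf-subalgebra claim follows directly from the three closure hypotheses (product and restriction closure plus connectedness in degree zero), and non-degeneracy is proved exactly as in the paper, by fixing $n$, using the previous Lemma's inequalities and its equality case, and exhibiting the matrix $\bigl(\langle \iota(P_i),P_j\rangle\bigr)$ over $\mathbf{X}\cap\dqp(n)$ as triangular with non-zero diagonal entries $\langle P_i,\iota(P_i)\rangle$. The one point where your sketch needs the refinement you yourself flag in the last paragraph is that ordering by $x_P$ alone is not enough: as in the paper, one must take a linear extension of the partial order on $\dqp(n)$ defined by the pair of invariants ($x_P\leq x_Q$ and $y_P\geq y_Q$), with the equality case of the Lemma guaranteeing that ties can only produce diagonal entries.
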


\begin{proof} Let us fix an integer $n$. We denote by $\dqp(n)$, respectively $\mathbf{X}(n)$, the set of double quasi-posets of order $n$,
respectively $\mathbf{X}\cap \dqp(n)$.
We define a relation $\leq$ on $\dqp(n)$ by:
\begin{align*}
\forall P,Q\in \dqp(n),\:P\prec Q\mbox{ if }&(P=Q) \\
&\mbox{ or }( (x_P,y_P)\neq (x_Q,y_Q),\: x_P\leq x_Q\mbox{ and }y_P\geq y_Q).
\end{align*}
This is a preorder on $\dqp(n)$. Let us assume that $P\prec Q$ and $Q\prec P$. Then $x_P\leq x_Q\leq x_P$ and $y_P\geq y_Q\geq y_P$, 
so $(x_P,y_P)=(x_Q,y_Q)$, which implies that $P=Q$: we proved that $\prec$ is an order on $\dqp(n)$. 
We now consider a linear extension $\leq$ of $\prec$. In other words, $\leq$ is a total order on $\dqp(n)$
such that for any $P,Q\in \dqp(n)$:
$$((x_P,y_Q)\neq (x_Q,y_Q),\: x_P\leq x_Q\mbox{ and }y_P\geq y_Q)\Longrightarrow P\leq Q.$$
Let us assume that $\langle P,Q\rangle\neq 0$. By the preceding lemma, $x_P\leq y_Q$ and $y_P\geq x_Q$, 
so $x_P\leq x_{\iota(Q)}$ and $y_P\geq y_{\iota(Q)}$. If $(x_P,y_P)\neq (y_Q,x_Q)$, then $P\geq \iota(Q)$;
if $(x_P,y_P)=(y_Q,x_Q)$, by the preceding lemma $P=\iota(Q)$. Finally:
$$\langle P,Q\rangle\neq 0\Longrightarrow P\geq \iota(Q).$$

We now write $\mathbf{X}(n)=\{P_1,\ldots,P_k\}$ in such a way that $\iota(P_1)\leq \ldots \leq \iota(P_k)$. The matrix 
$(\langle \iota(P_i),P_j\rangle)_{1\leq i,j\leq k}$ is upper triangular, and its diagonal terms are the elements $\langle P_i,\iota(P_i)\rangle$, 
which are non-zero by the preceding lemma: this matrix is invertible. Hence, $\langle-,-\rangle_{\mid \h_\mathbf{X}}$ is non-degenerate.\end{proof}

This can be applied with $\mathbf{X}=\dqp$ or $\mathbf{X}=\dpos$.

\begin{cor}
The pairings $\langle-,-\rangle$ and $\langle-,-\rangle_{\mid \h_\dpos}$ are non-degenerate. The kernel of $\langle-,-\rangle_<$ is $Ker(pos)$.
\end{cor}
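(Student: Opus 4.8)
The plan is to derive both assertions of the Corollary as immediate consequences of Proposition \ref{propnd} and of the identity
$\langle x,y\rangle_< = \langle pos(x),pos(y)\rangle$
established in the Proposition on pairings above. No new combinatorics is needed; the work is in checking hypotheses and chaining equivalences.

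For the non-degeneracy statements, I would apply Proposition \ref{propnd} twice, once with $\mathbf{X}=\dqp$ and once with $\mathbf{X}=\dpos$. In both cases the three closure conditions (containing $1$, stability under the product, and under restriction $P\mapsto P_{\mid B}$) hold: for $\dpos$ this is exactly the content of the earlier observation that $\h_\dpos$ is a Hopf subalgebra, and restriction of two orders to a subset again yields two orders. The extra hypothesis $x\in\mathbf{X}\Rightarrow\iota(x)\in\mathbf{X}$ is clear since $\iota$ merely swaps $\leq_1$ and $\leq_2$, which preserves both $\dqp$ and $\dpos$. Proposition \ref{propnd} then gives that $\langle-,-\rangle$ is non-degenerate on $\h_\dqp$ and that $\langle-,-\rangle_{\mid\h_\dpos}$ is non-degenerate, which is the first sentence.

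For the kernel of $\langle-,-\rangle_<$, I would argue by a double inclusion using the identity above and the surjectivity of $pos$ onto $\h_\dpos$. Surjectivity holds because $pos$ restricts to the identity on $\h_\dpos$ (for a double poset, $i<_1 j$ or $i=j$ is equivalent to $i\leq_1 j$, since $\leq_1$ is antisymmetric), so $pos$ is a projection; hence as $y$ runs over $\h_\dqp$, $pos(y)$ runs over all of $\h_\dpos$. If $pos(x)=0$ then $\langle x,y\rangle_< = \langle pos(x),pos(y)\rangle = 0$ for every $y$, giving $Ker(pos)\subseteq Ker\langle-,-\rangle_<$. Conversely, if $x\in Ker\langle-,-\rangle_<$, then $\langle pos(x),z\rangle=0$ for all $z\in\h_\dpos$ by surjectivity, so $pos(x)$ lies in the kernel of $\langle-,-\rangle_{\mid\h_\dpos}$; since that pairing was just shown to be non-degenerate, $pos(x)=0$, i.e. $x\in Ker(pos)$. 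This yields $Ker\langle-,-\rangle_< = Ker(pos)$.

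The only delicate point, and the one I would state explicitly, is the reduction from a statement about the degenerate pairing $\langle-,-\rangle_<$ on all of $\h_\dqp$ to the non-degenerate pairing on $\h_\dpos$: it rests entirely on $pos$ being a surjective projection and on the factorization $\langle x,y\rangle_< = \langle pos(x),pos(y)\rangle$. Everything else is formal.
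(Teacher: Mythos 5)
Your proof is correct and follows the same route as the paper's: apply Proposition \ref{propnd} to the families $\dqp$ and $\dpos$, then deduce $Ker(\langle-,-\rangle_<)=Ker(pos)$ from the factorization $\langle x,y\rangle_<=\langle pos(x),pos(y)\rangle$ together with $Im(pos)=\h_\dpos$ and the non-degeneracy of the restricted pairing. You merely spell out the hypothesis-checking and the double inclusion that the paper leaves implicit.
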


\begin{proof}
The families $\dqp$ and $\dpos$ satisfy the hypotheses of proposition \ref{propnd}, so $\langle-,-\rangle$ and $\langle-,-\rangle_{\mid \h_\dpos}$
are non-degenerate. 
As $Im(pos)=\h_\dpos$ and $\langle-,-\rangle_{\mid \h_\dpos}$ is non-degenerate, $Ker(pos)=Ker(\langle-,-\rangle_<)$. \end{proof}

\begin{prop} \label{propcouplage}
For all $x,y\in \h_\sqp$:
\begin{align*}
\langle \Upsilon(x),\Upsilon(y)\rangle&=\langle x,y\rangle_<.
\end{align*}
\end{prop}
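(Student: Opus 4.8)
The plan is to prove the identity by a direct bijective argument that turns the left-hand side into a count of pictures of blow ups. Since $\Upsilon(P)=b(P)=\sum_{P'\in B(P)}P'$ and the pairing $\langle-,-\rangle=\sharp Pic(-,-)$ is bilinear, for $P,Q\in\sqp$ one has
$$\langle\Upsilon(P),\Upsilon(Q)\rangle=\sum_{P'\in B(P)}\sum_{Q'\in B(Q)}\sharp Pic(P',Q').$$
As $\langle P,Q\rangle_<=\sharp Pic_<(P,Q)$, it suffices to produce a bijection
$$\Phi:\bigsqcup_{P'\in B(P),\,Q'\in B(Q)}Pic(P',Q')\longrightarrow Pic_<(P,Q),\qquad (P',Q',f)\longmapsto f,$$
so the whole argument reduces to analysing this forgetful map, and the general statement then follows by bilinearity.

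First I would check $\Phi$ is well defined, i.e.\ that $f\in Pic(P',Q')$ forces $f\in Pic_<(P,Q)$. The point is that an elementary blow up never destroys a strict relation: if $i<_1 j$ in $P$ then $i,j$ lie in distinct $\sim_1$-classes, so the relation between them is untouched and $i<_1' j$ holds in every $P'\in B(P)$, and dually for $Q$. Feeding this into the picture inequalities $i<_1'j\Rightarrow f(i)<_2f(j)$ and $f(i)<_1'f(j)\Rightarrow i<_2 j$ yields exactly the two defining conditions of $Pic_<(P,Q)$.

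The substance of the proof, and the step I expect to be the main obstacle, is the construction of the inverse. Given $f\in Pic_<(P,Q)$, I would recover the blow ups by transporting the second preorders across $f$: on each $\sim_1$-equivalence class $C$ of $P$ set $i\preceq^C j\Leftrightarrow f(i)\leq_2 f(j)$, and symmetrically refine each $\sim_1$-class of $Q$ by pulling the preorder $\leq_2$ of $P$ back along $f^{-1}$. Here the hypothesis $P,Q\in\sqp$ is essential: because $\leq_2$ is a \emph{total} preorder, each $\preceq^C$ is again a total preorder, so these data define genuine elementary blow ups $P'\in B(P)$ and $Q'\in B(Q)$. One then verifies $f\in Pic(P',Q')$: the two conditions bearing on $\leq_1'$ in $P'$ hold by the very definition of $\preceq^C$ for pairs inside a common $\sim_1$-class of $P$, and by the prepicture property together with the preservation of strict relations for pairs in distinct classes; the two conditions bearing on $\leq_1'$ in $Q'$ are handled symmetrically through $f^{-1}$.

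Finally I would check that this inverse is forced, which gives bijectivity. Suppose $f\in Pic(P'',Q'')$ for some blow ups $P'',Q''$. Restricting to a $\sim_1$-class $C$ of $P$, on which $\leq_1''$ is a total preorder, the picture conditions $i\leq_1''j\Rightarrow f(i)\leq_2 f(j)$ and $i<_1''j\Rightarrow f(i)<_2 f(j)$, combined with totality of $\leq_1''$ on $C$ and of $\leq_2$ on $f(C)$, force the equivalence $i\leq_1''j\Leftrightarrow f(i)\leq_2 f(j)$; since $\leq_1''$ agrees with $\leq_1$ across distinct classes, this gives $P''=P'$, and dually $Q''=Q'$. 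Hence $\Phi$ is a bijection, the two cardinalities coincide, and $\langle\Upsilon(P),\Upsilon(Q)\rangle=\langle P,Q\rangle_<$. Conceptually, the identity expresses that the Hopf pairing $\langle-,-\rangle$ on $(\h_\dqp,m,\Delta)$ pulls back along the Hopf isomorphism $\Upsilon$ to the pairing $\langle-,-\rangle_<$ on $(\h_\dqp,m,\Delta_<)$, restricted to the Hopf subalgebra $\h_\sqp$, the map $\Phi$ being the combinatorial incarnation of this compatibility.
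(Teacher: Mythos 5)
Your proof is correct and takes essentially the same route as the paper's: reduce by bilinearity, note that pictures between blow ups are prepictures (strict relations survive blowing up), and conversely pull back $\leq_2$ along $f$ onto each $\sim_1$-class, where totality of $\leq_2$ for special double quasi-posets guarantees these pullbacks are genuine blow ups. Your final step, checking that the pair $(P',Q')$ is forced by $f$, makes explicit the injectivity of the forgetful map that the paper's sketch leaves implicit; this is a welcome completion rather than a different argument.
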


\begin{proof}
The identity $\langle \Upsilon(x),\Upsilon(y)\rangle=\langle x,y\rangle_<$ is better understood as the consequence of a bijection 
from pictures between the blow ups of special double quasi-posets $P$ and $Q$, to the prepictures between $P$ and $Q$.  We sketch the proof.
Let $P'$ and $Q'$ be blow ups of $P$ and $Q$. Since $i<_1 j$ in $P$ implies $i<_1 j$ in $P'$, and similarly for $Q$ and $Q'$, a picture between $P'$ and $Q'$ is a prepicture between $P$ and $Q$.
Let conversely $f$ be a prepicture between $P$ and $Q$. Let $\{P_i\}_{i=1\dots k}$ be the set of all nontrivial equivalence class for $\sim_1$ in $P$. The inverse image by $f$ of the preorder $\leq_2$ on each $f(P_i)\subset V(Q)$ is a preorder $\leq^i$ on $P_i$. Blowing up $P$ successively along $\leq^1,\dots,\leq^k$ defines a blow up $P'$ of $P$. By symmetry, a blow up $Q'$ of $Q$ is defined by the same process. By construction, $f$ is a picture between $P'$ and $Q'$. \end{proof}

\section{Internal products}\label{ip}
This section addresses the question of internal products. The existence of internal products (by which we mean the existence of an associative product on double posets with a given cardinality) is a classical property of combinatorial Hopf algebras: in the representation theory of the symmetric group (or equivalently in the algebra of symmetric functions) the internal product is obtained from the tensor product of representations, and this product extends naturally to various noncommutative versions, such as the descent algebra or the Malvenuto-Reutenauer Hopf algebra \cite{MR}. 

The rich structure of double quasi-posets allows for the definition of two internal associative products generalizing the corresponding structures on double posets \cite{MR2}.

\begin{defi}Let $P,Q\in \dqp$ and $f:V(P)\longrightarrow V(Q)$ a bijection.
We define a double quasi-poset, the product of $P$ and $Q$ over $f$, $P\times_f Q=(V(P),\leq_1^f,\leq_2)$ by:
\begin{align*}
\forall i,j\in V(P),\:&i\leq^f_1 j\mbox{ if }f(i)\leq_1 f(j),
\end{align*}
where $\leq_2$ is the second preorder on $P$.
\end{defi}
\begin{defi}
Let $P,Q\in \dqp$ and $f:V(P)\longrightarrow V(Q)$ a bijection.
\begin{enumerate}
\item We shall say that $f$ is a semi-prepicture between $P$ and $Q$ if:
\begin{align*}
\forall i,j\in V(P),\:&i<_1 j\Longrightarrow f(i)<_2 f(j).
\end{align*}
The set of semi-prepictures between $P$ and $Q$ is denoted by $I_<(P,Q)$.
\item We shall say that $f$ a semi-picture between $P$ and $Q$ if:
\begin{align*}
\forall i,j\in V(P),\:&i<_1 j\Longrightarrow f(i)<_2 f(j),&i\leq_1 j\Longrightarrow f(i)\leq_2 f(j).
\end{align*}
The set of semi-pictures between $P$ and $Q$ is denoted by $I(P,Q)$.
\end{enumerate} \end{defi}

\textbf{Remark.} For any $P,Q\in \dqp$:
\begin{align*}
Pic_<(P,Q)&=I_<(P,Q)\cap I_<(Q,P)^{-1},&Pic(P,Q)&=I(P,Q)\cap I(Q,P)^{-1}.
\end{align*}

\begin{prop}
For any double quasi-posets $P$, $Q$, we put:
\begin{align*}
P\unlhd Q&=\sum_{f\in I(P,Q)} P\times_f Q,&P\lhd Q&=\sum_{f\in I_<(P,Q)} P\times_f Q.
\end{align*}
These products are bilinearly extended to $\h_\dqp$. Then both $\unlhd$ and $\lhd$ are associative and, for all $x,y,z\in \h_\dqp$:
\begin{align*}
\langle x\unlhd y,z\rangle&=\langle x,y\unlhd z\rangle,&\langle x\lhd y,z\rangle_<&=\langle x,y\lhd z\rangle_<.
\end{align*}\end{prop}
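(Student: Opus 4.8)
The plan is to establish the two assertions—associativity and the adjunction (Hopf pairing compatibility)—separately for each product, handling $\unlhd$ in detail and then indicating the parallel argument for $\lhd$. For associativity of $\unlhd$, I would first unwind the definition: $(P\unlhd Q)\unlhd R$ is a sum over pairs $(f,g)$ with $f\in I(P,Q)$ and $g\in I(P\times_f Q,R)$ of the double quasi-poset $(P\times_f Q)\times_g R$, while $P\unlhd(Q\unlhd R)$ sums over pairs $(g,h)$ with $h\in I(Q,R)$ and $g\in I(P,Q\times_h R)$. The key observation is that $P\times_f Q$ and $P$ share the same underlying set $V(P)$ and the same second preorder $\leq_2$, while the first preorder is pulled back along $f$; hence $(P\times_f Q)\times_g R=(V(P),\leq_1^{g},\leq_2)$ where $i\leq_1^{g}j$ iff $g(i)\leq_1^{f}g(j)$ in $P\times_f Q$ iff $f(g(i))\leq_1 f(g(j))$ in $Q$, composing cleanly. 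I expect both iterated sums to reindex, via $(f,g)\mapsto(g,f\circ g)$ or a similar substitution on the bijections, onto the same set of triple-composites, so that both sides equal a single sum over composable triples producing $(V(P),\leq_1',\leq_2)$ with $\leq_1'$ the pullback of $\leq_1$ on $R$ along the appropriate composite.

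The main obstacle is the bookkeeping of which semi-picture conditions survive composition: I must check that the defining inequalities of $I(-,-)$ are compatible with the pullback construction, i.e. that $f\in I(P,Q)$ and $g\in I(P\times_f Q,R)$ force the composite to be the relevant semi-picture data, and conversely that every admissible $(g,h)$ on the right arises exactly once. Concretely, the condition $i<_1 j\Rightarrow f(i)<_2 f(j)$ in $I(P,Q)$ together with the matching condition for $g$ must combine to the condition defining membership in $I(Q,R)$ after the substitution; since $\leq_2$ is fixed throughout (it is always the second preorder of the leftmost object), these conditions transport correctly, but verifying the bijection of index sets carefully is the delicate point.

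For the adjunction $\langle x\unlhd y,z\rangle=\langle x,y\unlhd z\rangle$, I would reduce by bilinearity to basis elements $P,Q,R\in\dqp$, so that the claim becomes a counting identity: the number of pairs $(f,g)$ with $f\in I(P,Q)$ and $g\in Pic(P\times_f Q,R)$ equals the number of pairs $(g,h)$ with $h\in I(Q,R)$ and $g\in Pic(P,Q\times_h R)$. The strategy is to construct an explicit bijection between these two sets of pairs, again using that $\times_f$ preserves the second preorder and pulls back the first; a pair on the left should correspond to a pair on the right by the same reindexing $(f,g)\mapsto(g,g\circ f\circ\ldots)$ used in the associativity argument, with the semi-picture and picture conditions matching up precisely because $i\leq_1^f j$ means $f(i)\leq_1 f(j)$. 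The $\lhd$ statements follow by the identical arguments with $\leq_1,\leq_2$-conditions replaced by their strict ($<_1,<_2$) versions throughout, i.e. replacing $I(-,-)$ by $I_<(-,-)$, $Pic$ by $Pic_<$, and $\langle-,-\rangle$ by $\langle-,-\rangle_<$; no new ideas are required since the strict conditions transport under the pullback in exactly the same way. I would present the $\unlhd$ case fully and then remark that the $\lhd$ case is verbatim with the strict relations.
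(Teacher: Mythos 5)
Your overall architecture is exactly the paper's: for associativity, biject the index set $\{(f,g)\mid f\in I(P,Q),\ g\in I(P\times_f Q,R)\}$ with $\{(f',g')\mid f'\in I(Q,R),\ g'\in I(P,Q\times_{f'}R)\}$ so that corresponding terms agree, and for the adjunction, restrict that same bijection to pairs whose second component is a picture (the paper's sets $Y''$, $Y'''$) and count. (The paper treats $\lhd$ first and gets $\unlhd$ by replacing $<$ with $\leq$; you do the reverse, which is immaterial.) You also correctly identify the well-definedness of the reindexing as the delicate point.

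However, your key computation is wrong, in a way that would derail the execution. By definition, $A\times_g B$ pulls back the first preorder of the \emph{right} factor $B$ along $g$; so for $g\in I(P\times_f Q,R)$, i.e. $g\colon V(P)\to V(R)$, the iterated product $(P\times_f Q)\times_g R$ has first preorder $i\leq j\Leftrightarrow g(i)\leq_1 g(j)$ \emph{in} $R$, with no occurrence of $f$ at all. Your formula ``$i\leq_1^{g}j$ iff $g(i)\leq_1^{f}g(j)$ in $P\times_f Q$ iff $f(g(i))\leq_1 f(g(j))$ in $Q$'' pulls back the left factor's preorder instead, and is in fact ill-typed: $g(i)\in V(R)$ while $\leq_1^f$ lives on $V(P)$, and $f\circ g$ does not compose since $f$ is defined on $V(P)$, not $V(R)$. (It also contradicts your own, correct, closing remark that the triple products are pullbacks of $\leq_1$ on $R$.) The same typing problem afflicts your proposed reindexing $(f,g)\mapsto(g,f\circ g)$. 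The correct bijection, which is the one the paper uses, is $(f,g)\mapsto (f',g')=(g\circ f^{-1},f)$ with inverse $(f',g')\mapsto(g',f'\circ g')$; it makes corresponding terms equal precisely because both $(P\times_f Q)\times_g R$ and $P\times_{g'}(Q\times_{f'}R)$ then carry the pullback of $\leq_1$ on $R$ along $g=f'\circ g'$. With that fixed, the remaining work is the four implications checking that $\phi$ and $\phi'$ land in the right sets (e.g. $i<_1 j$ in $Q$ gives $f^{-1}(i)<_1^f f^{-1}(j)$ in $P\times_f Q$, hence $g\circ f^{-1}(i)<_2 g\circ f^{-1}(j)$ in $R$, so $g\circ f^{-1}\in I_<(Q,R)$), plus the analogous check that the bijection restricts to the picture subsets for the pairing identity; your plan names these checks but, as written, would attempt them with the wrong maps.
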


\begin{proof} \textit{First step.} Let us first prove the associativity of $\lhd$. Let $P,Q,R \in \dqp$. We consider:
\begin{align*}
X&=\{(f,g)\mid f\in I_<(P,Q),\:g\in I_<(P\times_f Q,R)\},\\
X'&=\{(f',g')\mid f'\in I_<(Q,R),\:g'\in I_<(P,Q\times_{f'} R)\}.
\end{align*}
We consider the maps:
\begin{align*}
\phi:&\left\{\begin{array}{rcl}
X&\longrightarrow&X'\\
(f,g)&\longrightarrow&(g\circ f^{-1},f),
\end{array}\right.&
\phi'&:\left\{\begin{array}{rcl}
X'&\longrightarrow&X\\
(f',g')&\longrightarrow&(g',f'\circ g').
\end{array}\right.
\end{align*}
Let us prove that they are well-defined. Let us take $(f,g)\in X$; we put $(f',g')=(g\circ f^{-1},f)$. If $i<_1j$ in $Q$,
then $f^{-1}(i)<_1^f f^{-1}(j)$ in $P\times_f Q$, so $f'(i)=g\circ f^{-1}(i)<_2 g\circ f^{-1}(j)=f'(j)$. 
If $i<_1 j$ in $P$, then $g'(i)=f(i)<_2 f(j)=g'(j)$ in $Q$, or equivalently in $Q\times_{f'} R$. So $\phi$ is well-defined.

Let us take $(f',g')\in X'$; we put $(f,g)=(g',f'\circ g')$. If $i<_1j$ in $P$, then $f(i)=g'(i)<_2g'(j)=f(j)$ in $Q\times_{f'} R$, so in $Q$.
If $i<^f_1 j$ in $P\times_f Q$, then $g'(i)=f(i)<_1 f(j)=g'(j)$ in $Q$, so $g(i)=f'\circ g'(i)<_2 f'\circ g'(j)$ in $R$. So $\phi'$ is well-defined. 

It is immediate to prove that $\phi\circ \phi'=Id_{X'}$ and $\phi'\circ \phi=Id_X$. We get finally:
\begin{align*}
(P\lhd Q)\lhd R&=\sum_{(f,g)\in X} (P\times_f Q)\times_g R\\
&=\sum_{(f',g')\in X'} P\times_{f'}(Q\times_{f'} R)=P\lhd(Q\lhd R).
\end{align*}
The associativity of $\unlhd$ is proved in the same way: we consider
\begin{align*}
X''&=\{(f,g)\mid f\in I(P,Q),\:g\in I(P\times_f Q,R)\}\subseteq X,\\
X'''&=\{(f',g')\mid f'\in I(Q,R),\:g'\in I(P,Q\times_{f'} R\}\subseteq X'.
\end{align*}
The same computations as before, replacing everywhere $<$ by $\leq$, proving that $\phi(X'')=X'''$, allow to conclude similarly.\\

\textit{Second step}. Let $P,Q,R \in \dqp$. We consider:
\begin{align*}
Y&=\{(f,g)\mid f\in I_<(P,Q),\:g\in Pic_<(P\times_f Q,R)\}\subseteq X,\\
Y'&=\{(f',g')\mid f'\in I_<(Q,R),\:g'\in Pic_<(P,Q\times_{f'} R)\}\subseteq X'.
\end{align*}
Let us prove that $\phi(Y)=Y'$. Let us take $(f,g)\in Y$; we put $(f',g')=\phi(f,g)$. If $g'(i)<_1 g'(j)$ in $Q\times_{f'} R$,
then $f(i)<_1 f(j)$ in $Q\times_{g\circ f^{-1}}R$, so $g(i)<_1 g(j)$ in $R$. As $g\in Pic_<(P\times_f Q,R)$,
$i<_2 j$ in $P\times_f Q$ or equivalently in $P$. If $(f',g')\in Y'$, we put $\phi'(f',g')=(f,g)$.
If $g(i)<_1 g(j)$ in $R$, then $f'\circ g'(i)<_1 f'\circ g'(j)$ in $R$, so $g'(i)<_1^{f'} g'(j)$ in $Q\times_{f'} R$. As $g'\in Pic_<(P,Q\times_{f'} R)$,
$i<_2 j$ in $P$, or in $P\times_f Q$.

Consequently:
\begin{align*}
\langle P\lhd Q,R\rangle_<&=\sharp Y=\sharp Y'=\langle P,Q\lhd R\rangle_<.
\end{align*}

Putting:
\begin{align*}
Y''&=\{(f,g)\mid f\in I(P,Q),\:g\in Pic(P\times_f Q,R)\}\subseteq X'',\\
Y'''&=\{(f',g')\mid f'\in I(Q,R),\:g'\in Pic(P,Q\times_{f'} R)\}\subseteq X''',
\end{align*}
we prove in the same way, replacing everywhere $<$ by $\leq$, that $\phi(Y'')=Y'''$. So $\langle P\unlhd Q,R\rangle=\langle P,Q\unlhd R\rangle$.
\end{proof}

\textbf{Remark.} $\h_\dpos$ and $\h_\sqp$ are stable under $\unlhd$ and $\lhd$.

\begin{prop}
For all $x,y\in \h_\sqp$:
\begin{align*}
\Upsilon(x\lhd y)&=\Upsilon(x)\unlhd \Upsilon(y).
\end{align*}
\end{prop}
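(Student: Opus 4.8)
The plan is to prove the identity on basis elements: by bilinearity of both products and linearity of $\Upsilon$, it suffices to check $\Upsilon(P\lhd Q)=\Upsilon(P)\unlhd \Upsilon(Q)$ for two special double quasi-posets $P,Q\in\sqp$. Both sides are then formal sums of double quasi-posets on $V(P)$, and I would establish the equality by exhibiting a summand-preserving bijection between their index sets. Expanding,
$$\Upsilon(P\lhd Q)=\sum_{f\in I_<(P,Q)}\ \sum_{R\in B(P\times_f Q)}R,\qquad \Upsilon(P)\unlhd\Upsilon(Q)=\sum_{P'\in B(P)}\ \sum_{Q'\in B(Q)}\ \sum_{g\in I(P',Q')}P'\times_g Q'.$$

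First I would simplify each side. On the left, since $P\times_f Q=(V(P),\leq_1^f,\leq_2)$ and $f$ identifies the $\sim_1$-classes of $\leq_1^f$ with those of $\leq_1$ on $Q$, choosing a blow up $R\in B(P\times_f Q)$ amounts to choosing a blow up $Q'\in B(Q)$, and then $R=P\times_f Q'$; thus the left-hand side equals $\sum_{f\in I_<(P,Q)}\sum_{Q'\in B(Q)}P\times_f Q'$. On the right, the first preorder of $P'\times_g Q'$ is pulled back from $Q'$ through $g$ and its second preorder is that of $P'$, which coincides with that of $P$ since blow ups leave $\leq_2$ unchanged; hence $P'\times_g Q'=P\times_g Q'$, and the right-hand side becomes $\sum_{P'\in B(P)}\sum_{Q'\in B(Q)}\sum_{g\in I(P',Q')}P\times_g Q'$.

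The core of the argument is then the map $(P',Q',g)\mapsto (g,\,P\times_g Q')$, which I would show is a bijection onto the pairs $(f,Q')$ with $f\in I_<(P,Q)$ and $Q'\in B(Q)$, matching the summands since $P\times_g Q'=P\times_f Q'$ when $f=g$. That it lands in $I_<(P,Q)$ is immediate: if $i<_1 j$ in $P$ then $i<_1 j$ in the blow up $P'$, so $g(i)<_2 g(j)$ in $Q'$, hence in $Q$ (blow ups preserve $\leq_2$ and so $<_2$). The crucial point is the reverse reconstruction: for a fixed $g\in I_<(P,Q)$, and any $Q'$, there is exactly one $P'\in B(P)$ with $g\in I(P',Q')$. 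Cross-class pairs impose no constraint beyond $g\in I_<(P,Q)$, while on each nontrivial $\sim_1$-class $P_\alpha$ of $P$ the two defining conditions of a semi-picture, namely $i\leq_1' j\Rightarrow g(i)\leq_2 g(j)$ together with $i<_1' j\Rightarrow g(i)<_2 g(j)$, pin down every comparison inside $P_\alpha$ and force the blow-up total preorder there to equal the pullback $\mu^\alpha$ of $\leq_2$ on $g(P_\alpha)$ through $g$. Since $Q$ is special, $\leq_2$ is total on $g(P_\alpha)$, so $\mu^\alpha$ is a genuine total preorder and the prescribed blow up $P'$ exists and is unique; recovering $Q'$ from $P\times_g Q'$ and $g$ is then immediate. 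This yields the bijection, and hence the identity.

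The hard part will be the existence-and-uniqueness of $P'$: checking carefully that the weak implication $i\leq_1 j\Rightarrow g(i)\leq_2 g(j)$ together with the strict one determines the refinement on each fibre exactly, and that specialness of $Q$ is precisely what guarantees the reconstructed fibrewise preorder is total, so that $P'$ is a legitimate blow up. Everything else—the identification $B(P\times_f Q)\cong B(Q)$, the equality $P'\times_g Q'=P\times_g Q'$, and the verification that the constructed correspondence is mutually inverse and summand-preserving—is routine bookkeeping on the two preorders.
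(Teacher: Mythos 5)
Your proof is correct and is essentially the paper's own argument: the paper first establishes $P\lhd Q=\Upsilon(P)\unlhd Q$ by exactly your blow-up reconstruction (pulling $\leq_2$ back along a semi-prepicture onto each nontrivial $\sim_1$-class of $P$, specialness of $Q$ being what makes the pulled-back preorder total, hence a legitimate and unique blow up), and then deduces the full identity from the identification of $B(P'\times_g Q)$ with $B(Q)$. You simply merge these two steps into a single bijection of index sets, relying on the same two key facts — that $P'\times_g Q'=P\times_g Q'$ since blow ups leave $\leq_2$ untouched, and that semi-pictures out of blow ups of $P$ correspond exactly to semi-prepictures out of $P$ — so no comparison beyond this is needed.
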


\begin{proof}
The same argument as in the proof of proposition \ref{propcouplage}, used with semi-pictures and semi-prepictures shows that 
$P\lhd Q=\Upsilon(P)\unlhd Q$. The identity $\Upsilon(P\lhd Q)=\Upsilon(P)\unlhd \Upsilon(Q)$ follows by noticing that $\Upsilon$ maps a double quasi-posets to the sum of its blow ups and that, since for $P'\unlhd Q$ with $P'\in B(P)$, $\leq_1$ is obtained as the inverse image along a semi-picture  of the preorder $\leq_1$ on $Q$, $\Upsilon(\Upsilon(P)\unlhd Q)=\Upsilon(P)\unlhd \Upsilon(Q)$.
\end{proof}

\section{Permutations and surjections}\label{ps}
In this section, we study the restriction of the internal products on double quasi-posets to the linear spans of surjections $k\E_n$. 
One product ($\unlhd$) identifies essentially with the naive composition product of surjections, but the other one, $\lhd$, that emerges naturally from the theory of pictures,
is  not induced by the composition of surjections and differs from the product in the Solomon-Tits algebra; recall that the latter is an algebra structure on ordered set partitions of $[n]$  -- that can be identified bijectively with surjections -- it emerges naturally from the theory of twisted Hopf algebras, also called Hopf species \cite{patras2006twisted}.

Let $w=w(1)\ldots w(n)=:w_1\dots w_n$ be a surjection from $[n]$ to $[k]$, or equivalently a packed word of length $n$ with $k$ distinct letters. We define a special double poset $P_w$ by:
\begin{enumerate}
\item $V(P_w)=\{1,\ldots,n\}$.
\item $\forall i,j\in \{1,\ldots,n\}$, $i\leq_1 j$ if $w_i\leq w_j$.
\item $\leq_2$ is the usual order on $\{1,\ldots,n\}$.
\end{enumerate}
We obtain in this way an injection from the set of surjections $\E_n$ to $\sqp$.

\begin{defi}
Let $w$ be a packed word of length $n$ and $\sigma \in \mathfrak{S_n}$. We shall say that $\sigma$ is $w$-compatible if:
\begin{align*}
\forall i,j\in \{1,\ldots,n\},\:& w_i< w_j \Longrightarrow \sigma(i)<\sigma(j).
\end{align*}
The set of $w$-compatible permutations is denoted by $Comp(w)$.
\end{defi}

\textbf{Remark}. If $w$ is a permutation, $Comp(w)=\{w\}$. In general:
\begin{align*}
\sharp Comp(w)&=\prod_{i=1}^{\max(w)} (\sharp w^{-1}(i))!.
\end{align*}

\begin{prop}
Let $u,v$ be two packed words of the same length $n$. Then:
\begin{align*}
P_u\lhd P_v&=\sum_{\sigma \in Comp(u)} P_{v\circ \sigma},&
P_u\unlhd P_v&=\begin{cases}
v\circ u\mbox{ if }u\in \mathfrak{S}_n,\\
0\mbox{ otherwise}.
\end{cases}\end{align*}
\end{prop}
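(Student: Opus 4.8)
The plan is to reduce both products to a direct combinatorial description of the index sets $I_<(P_u,P_v)$ and $I(P_u,P_v)$, after first identifying the elementary products $P_u\times_f P_v$. First I would compute, for an arbitrary bijection $f:[n]\to[n]$, the double quasi-poset $P_u\times_f P_v$. By definition its second preorder is the usual order on $[n]$ (the order $\leq_2$ of $P_u$), and its first preorder is $i\leq_1^f j\Leftrightarrow f(i)\leq_1 f(j)$ in $P_v\Leftrightarrow v_{f(i)}\leq v_{f(j)}$. Setting $w_i:=v_{f(i)}=(v\circ f)_i$, this is exactly the preorder attached to the packed word $v\circ f$ (which is again packed, since $f$ is a bijection and $v\circ f$ has the same letters as $v$), and the second orders match as well. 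Hence $P_u\times_f P_v=P_{v\circ f}$, which reduces everything to describing which $f$ occur.

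Next I would unwind $I_<(P_u,P_v)$. A bijection $f$ lies there iff $i<_1 j$ in $P_u$ implies $f(i)<_2 f(j)$ in $P_v$. Since $\leq_1$ on $P_u$ is induced by $u$, one has $i<_1 j\Leftrightarrow u_i<u_j$; and since $\leq_2$ on $P_v$ is the strict usual order, $f(i)<_2 f(j)\Leftrightarrow f(i)<f(j)$. Thus the defining condition is precisely $u_i<u_j\Rightarrow f(i)<f(j)$, that is $f\in Comp(u)$. Therefore $I_<(P_u,P_v)=Comp(u)$ and
$$P_u\lhd P_v=\sum_{f\in I_<(P_u,P_v)}P_u\times_f P_v=\sum_{\sigma\in Comp(u)}P_{v\circ\sigma},$$
which is the first identity.

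For $\unlhd$ I would analyse the extra condition defining $I(P_u,P_v)$, namely $i\leq_1 j\Rightarrow f(i)\leq_2 f(j)$. If $u$ has a repeated letter, pick $i\neq j$ with $u_i=u_j$; then $i\leq_1 j$ and $j\leq_1 i$ both hold, forcing $f(i)\leq f(j)$ and $f(j)\leq f(i)$, hence $f(i)=f(j)$ by antisymmetry of the usual order, contradicting injectivity of $f$. So $I(P_u,P_v)=\emptyset$ and $P_u\unlhd P_v=0$ unless $u\in\mathfrak{S}_n$. When $u\in\mathfrak{S}_n$, the condition $u_i\leq u_j\Rightarrow f(i)\leq f(j)$ says that $f$ is the increasing bijection for the total order $i\preceq j\Leftrightarrow u_i\leq u_j$; there is a unique such $f$, given by the rank function $f(i)=\#\{k:u_k\leq u_i\}=u_i$, i.e.\ $f=u$ (which also satisfies the strict condition). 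Hence $I(P_u,P_v)=\{u\}$ and $P_u\unlhd P_v=P_u\times_u P_v=P_{v\circ u}$, identified with the packed word $v\circ u$.

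The computations are essentially bookkeeping; the only point requiring care—more a potential source of error than a genuine obstacle—is tracking the direction of composition (that $P_u\times_f P_v=P_{v\circ f}$, with $f$ applied before comparing in $P_v$) and verifying that $v\circ f$ remains a packed word, so that the identification $P_u\times_f P_v=P_{v\circ f}$ through the injection $\E_n\hookrightarrow\sqp$ is legitimate.
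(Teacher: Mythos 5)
Your proof is correct and follows essentially the same route as the paper: identify $I_<(P_u,P_v)=Comp(u)$ and $I(P_u,P_v)=\{u\}$ (empty unless $u\in\mathfrak{S}_n$), then compute $P_u\times_f P_v=P_{v\circ f}$. You merely supply details the paper leaves implicit (the repeated-letter obstruction and the rank-function argument forcing $f=u$), which is fine.
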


\begin{proof} Let $\sigma \in \mathfrak{S}_n$. Then $\sigma \in I_<(P_u,P_v)$ if, and only if, $\sigma \in Comp(u)$.
Moreover, $\sigma \in I(P_u,P_v)$ if, and only if, $\sigma=u$. Hence:
\begin{align*}
P_u\lhd P_v&=\sum_{\sigma \in Comp(u)} P_u\times_\sigma P_v,&
P_u\unlhd P_v&=\begin{cases}
P_u \times_u P_v\mbox{ if }u\in \mathfrak{S}_n,\\
0\mbox{ otherwise}.
\end{cases}\end{align*}
Moreover, for all $1\leq i,j\leq n$:
\begin{align*}
i\leq_1^\sigma j \mbox{ in }P_u\times_\sigma P_v&\Longleftrightarrow \sigma(i) \leq_1 \sigma(j)\mbox{ in }P_v
\Longleftrightarrow v\circ\sigma(i)\leq v\circ \sigma(j).
\end{align*}
So $P_u\times_\sigma P_v=P_{v\circ \sigma}$.\end{proof}

\textbf{Remark.} In particular, if $u,v\in \mathfrak{S}_n$, $P_u\lhd P_v=P_u\unlhd P_v=P_{v\circ u}$.

\begin{prop}
The following maps are algebra morphisms:
\begin{align*}
\zeta:&\left\{\begin{array}{rcl}
(k\E_n,\lhd)&\longrightarrow&(k\mathfrak{S}_n,\circ)\\
P_w&\longrightarrow&\displaystyle \sum_{\sigma\in Comp(w)} \sigma^{-1},
\end{array}\right.&
\zeta':&\left\{\begin{array}{rcl}
(k\mathfrak{S}_n,\circ)&\longrightarrow&(k\E_n,\lhd)\\
\sigma&\longrightarrow&P_{\sigma^{-1}}.
\end{array}\right.
\end{align*}
Moreover, the following diagram commutes:
$$\xymatrix{k\mathfrak{S}_n\ar[r]^{\zeta'} \ar[rd]_{Id}&k\E_n\ar[d]^{\zeta}\\
&k\mathfrak{S}_n}$$
\end{prop}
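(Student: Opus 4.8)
The plan is to verify the two algebra-morphism claims separately and then check the commutativity of the triangle, which will follow almost immediately. First I would establish that $\zeta'$ is a morphism from $(k\mathfrak{S}_n,\circ)$ to $(k\E_n,\lhd)$. For permutations $\sigma,\tau\in\mathfrak{S}_n$, the preceding Remark gives $P_u\lhd P_v=P_{v\circ u}$ whenever $u,v$ are themselves permutations. Since $\sigma^{-1}$ and $\tau^{-1}$ are permutations, I would compute $\zeta'(\sigma)\lhd\zeta'(\tau)=P_{\sigma^{-1}}\lhd P_{\tau^{-1}}=P_{\tau^{-1}\circ\sigma^{-1}}=P_{(\sigma\circ\tau)^{-1}}=\zeta'(\sigma\circ\tau)$, so $\zeta'$ respects products; it clearly sends the identity permutation to $P_{\mathrm{id}}$, the unit. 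This step is routine given the Remark.

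Next I would treat $\zeta$, which is the genuinely substantial part. Using the formula $P_u\lhd P_v=\sum_{\sigma\in Comp(u)}P_{v\circ\sigma}$ from the Proposition above, I would expand $\zeta(P_u\lhd P_v)=\sum_{\sigma\in Comp(u)}\zeta(P_{v\circ\sigma})=\sum_{\sigma\in Comp(u)}\sum_{\rho\in Comp(v\circ\sigma)}\rho^{-1}$, and compare it with $\zeta(P_u)\circ\zeta(P_v)=\bigl(\sum_{\alpha\in Comp(u)}\alpha^{-1}\bigr)\circ\bigl(\sum_{\beta\in Comp(v)}\beta^{-1}\bigr)=\sum_{\alpha\in Comp(u),\,\beta\in Comp(v)}\alpha^{-1}\circ\beta^{-1}=\sum_{\alpha,\beta}(\beta\circ\alpha)^{-1}$. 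The heart of the argument is therefore a bijection between the index set $\{(\sigma,\rho):\sigma\in Comp(u),\ \rho\in Comp(v\circ\sigma)\}$ and the index set $\{(\alpha,\beta):\alpha\in Comp(u),\ \beta\in Comp(v)\}$ under which $\rho$ corresponds to $\beta\circ\alpha$. I expect this to come from the chain rule for compatibility: if $\sigma\in Comp(u)$ and $\rho\in Comp(v\circ\sigma)$, then writing $\beta:=\rho\circ\sigma^{-1}$ I would check $\beta\in Comp(v)$ and set $\alpha:=\sigma$, recovering $\rho=\beta\circ\alpha$; conversely from $(\alpha,\beta)$ I recover $\sigma=\alpha$ and $\rho=\beta\circ\alpha$. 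Verifying that $\beta=\rho\circ\sigma^{-1}$ is indeed $v$-compatible precisely when $\rho$ is $(v\circ\sigma)$-compatible is the main obstacle: one must unwind the defining implication $w_i<w_j\Rightarrow(\cdot)(i)<(\cdot)(j)$ and track how precomposition by $\sigma$ transports the packed word $v$ to $v\circ\sigma$ and the permutation appropriately. I would phrase this as a clean lemma on the composition of compatibility classes, $Comp(v\circ\sigma)=Comp(v)\circ\sigma$ when $\sigma\in Comp(u)$ (or more simply for all $\sigma$, since $Comp(v\circ\sigma)=\{\beta\circ\sigma:\beta\in Comp(v)\}$), after which the bijection is transparent.

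Finally, the commuting triangle asserts $\zeta\circ\zeta'=Id_{k\mathfrak{S}_n}$. I would evaluate on a permutation $\sigma$: by definition $\zeta(\zeta'(\sigma))=\zeta(P_{\sigma^{-1}})=\sum_{\tau\in Comp(\sigma^{-1})}\tau^{-1}$, and since $\sigma^{-1}$ is a permutation the Remark $Comp(w)=\{w\}$ for $w$ a permutation forces $Comp(\sigma^{-1})=\{\sigma^{-1}\}$, whence the sum collapses to $(\sigma^{-1})^{-1}=\sigma$. Thus $\zeta\circ\zeta'=Id$, as required. This last step is immediate once the singleton description of $Comp$ on permutations is invoked, so no further work is needed beyond assembling the three pieces.
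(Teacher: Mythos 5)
Your proof is correct and follows essentially the same route as the paper: the heart in both cases is the compatibility-transport lemma $Comp(v\circ\sigma)=Comp(v)\circ\sigma$ (which the paper phrases as $\tau\in Comp(v\circ\sigma)\Longleftrightarrow\tau\circ\sigma^{-1}\in Comp(v)$), followed by exactly the same re-indexing of the double sum $\sum_{\sigma\in Comp(u)}\sum_{\rho\in Comp(v\circ\sigma)}\rho^{-1}$, and the paper likewise treats the $\zeta'$ and triangle claims as immediate consequences of the Remark ($Comp(w)=\{w\}$ for permutations, $P_u\lhd P_v=P_{v\circ u}$ for permutations). The only blemish is your aside that $P_{\mathrm{id}}$ is ``the unit'' of $(k\E_n,\lhd)$: it is only a left unit (e.g.\ $P_{11}\lhd P_{\mathrm{id}}=P_{12}+P_{21}$), so ``algebra morphism'' here should be read as product-preserving, which is all your argument actually uses.
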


\begin{proof} Let $u,v$ be packed words of length $n$. For any $\sigma,\tau \in \mathfrak{S}_n$:
\begin{align*}
\tau \in Comp(v\circ \sigma)&\Longleftrightarrow (\forall \:1\leq i,j\leq n,\: v\circ \sigma(i)<v\circ \sigma(j)\Longrightarrow \tau(i)<\tau(j))\\
&\Longleftrightarrow (\forall \: 1\leq i,j\leq n,\: v(i)<v(j)\Longrightarrow \tau\circ \sigma^{-1}(i)<\tau\circ \sigma^{-1}(j))\\
&\Longleftrightarrow \tau \circ \sigma^{-1} \in Comp(v).
\end{align*}
Hence:
\begin{align*}
\zeta(P_u \lhd P_v)&=\sum_{\sigma \in Comp(u), \tau \in Comp(v\circ \sigma)} \tau^{-1}\\
&=\sum_{\sigma \in Comp(u), \tau\in Comp(v)}(\tau \circ \sigma)^{-1}\\
&=\sum_{\sigma \in Comp(u), \tau\in Comp(v)}\sigma^{-1} \circ \tau^{-1}\\
&=\zeta(P_u)\circ \zeta(P_v).
\end{align*}
So $\zeta$ is an algebra morphism. \end{proof}

We conclude by comparing the two products on $k\E_n$.

\begin{prop}The map $\Upsilon$ restricts to an algebra isomorphism
$$\Upsilon:(k\E_n,\lhd)\longrightarrow (k\E_n,\unlhd).$$
\end{prop}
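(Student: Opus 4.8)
The plan is to show that $\Upsilon$ maps $(k\E_n,\lhd)$ isomorphically onto $(k\E_n,\unlhd)$ by combining the two earlier facts we already have at our disposal: the algebra identity $\Upsilon(x\lhd y)=\Upsilon(x)\unlhd\Upsilon(y)$ for $x,y\in\h_\sqp$ (the Proposition immediately preceding Section \ref{ps}), and the fact that $\Upsilon$ is a linear isomorphism of $\h_\dqp$ (established when we proved $\Upsilon$ is a Hopf algebra isomorphism). Since each $P_w$ for a packed word $w$ is a special double poset, and in particular an element of $\h_\sqp$, the multiplicativity identity applies verbatim to the elements $P_u,P_v$ spanning $k\E_n$. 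So the algebraic heart of the statement is already done; what must be checked is that $\Upsilon$ genuinely restricts to an endomorphism of the subspace $k\E_n$, i.e. that $\Upsilon(k\E_n)\subseteq k\E_n$.

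First I would make precise what $\Upsilon(P_w)$ is. By definition $\Upsilon(P_w)=b(P_w)=\sum_{P'\in B(P_w)}P'$ is the sum of the blow ups of $P_w$. But $P_w$ is a \emph{special double poset}: its first preorder $\leq_1$ (induced by $w_i\leq w_j$) has nontrivial $\sim_1$-classes exactly when $w$ has repeated letters, while its second order $\leq_2$ is the strict usual order on $[n]$. The key observation is that each blow up of $P_w$ replaces the preorder $\leq_1$ by a \emph{finer} preorder obtained by totally preordering each $\sim_1$-class; combined with the fixed total order $\leq_2$, every blow up $P'$ of $P_w$ is again a special double quasi-poset whose $\leq_1$ arises from a packed word. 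Concretely, blowing up the $\sim_1$-class $\{i:w_i=a\}$ along a chosen total preorder refines the ranks, and the resulting $\leq_1$ is the one attached to some packed word $w'$ with $\max(w')\geq\max(w)$; thus $P'=P_{w'}$ for a packed word $w'$, so $b(P_w)$ is a linear combination of $P_{w'}$'s and indeed lies in $k\E_n$.

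With $\Upsilon(k\E_n)\subseteq k\E_n$ in hand, I would conclude as follows. The restriction of $\Upsilon$ to $k\E_n$ is injective because $\Upsilon$ is injective on all of $\h_\dqp$; it is surjective onto $k\E_n$ because $b(P_w)=P_w+(\text{higher terms for the order }\leq)$, so in the basis $\{P_w\}$ ordered compatibly with $\leq$ the matrix of $\Upsilon|_{k\E_n}$ is unitriangular, hence invertible within $k\E_n$. Finally, the previous proposition gives $\Upsilon(P_u\lhd P_v)=\Upsilon(P_u)\unlhd\Upsilon(P_v)$ for all packed words $u,v$, so $\Upsilon|_{k\E_n}$ is an algebra morphism $(k\E_n,\lhd)\to(k\E_n,\unlhd)$, and being bijective it is the desired algebra isomorphism.

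The main obstacle, and the only step that is not purely formal, is the verification that the blow ups of $P_w$ stay inside the span of surjections — that is, the identification $B(P_w)\subseteq\{P_{w'}:w'\in\E_n\}$. This rests on checking that refining the $\sim_1$-classes of $P_w$ by arbitrary total preorders, while keeping the strict order $\leq_2$ on $[n]$, always produces a $\leq_1$ of the form ``$w'_i\leq w'_j$'' for a packed word $w'$; once this compatibility between blow ups and the packed-word encoding is nailed down, everything else follows from results already proved.
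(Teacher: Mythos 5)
Your proof is correct and follows essentially the same route as the paper: both rest on the identity $\Upsilon(x\lhd y)=\Upsilon(x)\unlhd \Upsilon(y)$ on $\h_\sqp$ together with the observation that the class of double quasi-posets of the form $P_w$ (those with $\leq_1$ a total preorder and $\leq_2$ a total order) is stable under blow ups. You simply spell out in more detail what the paper leaves implicit, namely the verification of blow-up stability and the unitriangularity argument giving bijectivity of the restriction.
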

\begin{proof}
Recall that, on $\h_\sqp$, 
$$\Upsilon(x\lhd y)=\Upsilon(x) \unlhd \Upsilon(y).$$
Let $P=(V(P),\leq_1,\leq_2)\in \dqp$. There exists a surjection or packed word $u$ such that $P=P_u$ if, and only if,
$\leq_2$ is a total order and $\leq_1$ is a total preorder.
The class of these double posets originating from surjections is stable by blow ups, the Proposition follows. \end{proof}


\bibliographystyle{amsplain}
\bibliography{biblio}

\end{document}